\newtheorem{thm}{Theorem}[section]
\newtheorem{lm}[thm]{Lemma}
\newtheorem{conj}{Conjecture}[section]
\newtheorem{pro}[thm]{Proposition}
\newtheorem{cor}[thm]{Corollary}
\newcommand{\F}{\mathcal{F}}
\newcommand{\E}{\mathcal{E}}
\newcommand{\A}{\mathcal{A}}
\newcommand{\B}{\mathcal{B}}
\newcommand{\pf}{\noindent {\bf Proof.} }
\begin{document}

\title{A stability result for almost perfect matchings\footnote{Supported by National Key R\&D Program of China under grant No. 2023YFA1010203 and  the National Natural
Science Foundation of China under grant No.12271425}}

\author{
Mingyang Guo, Hongliang Lu\footnote{Corresponding email: luhongliang215@sina.com} \\School of Mathematics and Statistics\\
Xi'an Jiaotong University\\
Xi'an, Shaanxi 710049, China
}

\date{}

\maketitle

\begin{abstract}
	Let $n,k,s$ be three integers and $\beta$ be a sufficiently small positive number such that $k\geq 3$, $0<1/n\ll \beta\ll 1/k$ and $ks+k\leq n\leq (1+\beta)ks+k-2$.  A $k$-graph is called non-trivial if it has no isolated vertex. In  this paper, we determine the maximum number of edges in a non-trivial $k$-graph with $n$ vertices and matching number at most $s$. This result confirms a conjecture proposed by Frankl (On non-trivial families without a perfect matching, \emph{European J. Combin.}, \textbf{84} (2020), 103044) for the case when $s$ is sufficiently large.
\end{abstract}

\section{Introduction}

For integers $n>k\geq 1$, let $[n]:=\{1,2,\cdots,n\}$ be the standard $n$ element set and $\binom{[n]}{k}=\{T\subseteq [n]:|T|=k\}$ be the collection of all its $k$-subsets. A \textit{$k$-uniform hypergraph} defined on $[n]$ is a family $\F$ such that $\F\subseteq \binom{[n]}{k}$. A $k$-uniform hypergraph is also called a \emph{$k$-graph}. For a $k$-graph $\F\subseteq \binom{[n]}{k}$, let $\nu(\F)$ denote the \textit{matching number} of $\F$, that is, the maximum number of pairwise disjoint members of $\F$.


Let $n,s,k$ be three positive integers such that $k\geq2$ and $n\geq ks+k-1$. For each $i\in [k]$, define
$$\A_i(n,k,s):=\left\{A\in\binom{[n]}{k}:|A\cap[(s+1)i-1]|\geq i\right\}.$$
In 1965, Erd\H os \cite{E65} asked for the determination of the maximum possible number of edges that can appear in any $k$-graph $\F$ with $\nu(\F)\leq s$. He conjectured that $\A_1(n,k,s)$ and $\A_k(n,k,s)$ are the two extremal constructions of this problem.
\begin{conj}[Erd\H os Matching Conjecture \cite{E65}]\label{conj}
	Let $n,s,k$ be three positive integers such that $k\geq2$ and $n\geq k(s+1)-1$. If $\F$ is a $k$-graph on $[n]$ and $\nu(\F)\leq s$, then
	\begin{equation*}
		|\F|\leq\max\left\{\binom{n}{k}-\binom{n-s}{k},\binom{k(s+1)-1}{k}\right\}.
	\end{equation*}
\end{conj}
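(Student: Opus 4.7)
The plan is to attack Conjecture \ref{conj} by combining the shifting technique with induction on $s$ and a case analysis on the size of $n$ relative to $k(s+1)$. First, by repeated application of the left-shifting operators $S_{ij}$ (with $i<j$), which preserve both $|\F|$ and $\nu(\F)$, I may assume $\F$ is shifted. In a shifted family the set of ``heavy'' vertices (those appearing in many edges) forms a structured kernel, and shiftedness forces monotone containment of edges in a lex-like order; these two features make the auxiliary counting arguments tractable and are what underlies essentially every partial result on this conjecture.

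Next I would split into two regimes. In the \emph{dense} regime $n\ge Cks$ for a suitable absolute constant $C$, the conjectured extremum is $\A_1(n,k,s)$, consisting of all $k$-sets meeting a fixed $s$-set. To prove this bound I would fix a maximum matching $M$ with $|M|=\nu(\F)$, partition edges of $\F$ by their intersection pattern with $V(M)$, and bound the number of edges disjoint from $V(M)$ by a random-partition or absorbing argument in the spirit of Frankl and of Frankl--Kupavskii: a random partition of $[n]\setminus V(M)$ into blocks would, if many disjoint edges avoided $V(M)$, produce a matching of size $s+1$. In the \emph{sparse} regime $n$ close to $k(s+1)-1$, the extremum should be $\A_k(n,k,s)=\binom{[k(s+1)-1]}{k}$; here I would induct on $s$, removing either a vertex of smallest degree or an edge disjoint from a fixed matching, then recovering the gain via a short direct count.

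The main obstacle is the \emph{intermediate} range, roughly $k(s+1)\le n\lesssim Cks$, where neither $\A_1$ nor $\A_k$ dominates by a clear margin. In this window the kernel of a shifted extremal $\F$ may have size strictly between $s$ and $k(s+1)-1$, so one cannot simply argue that an extremal family is either supported on a small clique or governed by an $s$-cover; genuine hybrid constructions have to be ruled out by finer junta-type approximation theorems, and the inductive gain in the sparse case becomes too small to absorb error terms.

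Because Conjecture \ref{conj} has resisted a full solution for nearly six decades, I do not expect this plan to succeed across all ranges of $n$: the best unconditional bounds to date (Frankl, Kupavskii) still cover only part of the dense regime. A realistic and natural target, and evidently the one pursued by the authors, is a stability or \emph{non-trivial} version in a narrow window of $n$ just above the sparse boundary $ks+k-1$, where forbidding isolated vertices suppresses the $\A_k$-type construction and the extremum can instead be pinned down explicitly by analyzing how a shifted $\F$ must cover every vertex.
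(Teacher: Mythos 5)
The statement you were asked to prove is the Erd\H{o}s Matching Conjecture itself, which is a famous open problem: the paper contains no proof of it (it is stated as Conjecture \ref{conj} and cited only as motivation), and for general $k$ and $n$ no proof is known to anyone. Your submission is accordingly not a proof but a research plan, and you are candid about that; still, for the record, every stage of it has a genuine gap. In the dense regime, the ``random-partition or absorbing argument'' is not an argument: the known proofs for $n > 2k^3 s$ (Bollob\'as--Daykin--Erd\H{o}s), $n \ge (2s+1)k - s$ (Frankl), and $n \ge \frac{5}{3}sk - \frac{2}{3}s$ (Frankl--Kupavskii) each require delicate counting or concentration inequalities that you do not supply, and none of them reaches all of $n \ge Cks$ for small $C$. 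In the sparse regime, the proposed induction on $s$ ``removing a vertex of smallest degree'' has no stated inductive gain to recover; the actual results there (Frankl for $n \le (k + k^{-2k-1}/2)(s+1)$, Kolupaev--Kupavskii for $n \le (k + 1/(100k))(s+1)$) rest on concentration arguments, not a simple induction. And you explicitly concede the intermediate range is unresolved, which is precisely where the conjecture remains open.

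Your final paragraph correctly identifies what the paper actually does: it proves a \emph{stability} statement (Theorem \ref{main-thm}) for non-trivial families in the narrow window $ks+k \le n \le (1+\beta)ks+k-2$, not the conjecture itself. That theorem is proved by a different route from anything in your sketch --- a degree-squared potential function to control meaningful shifts while preserving non-triviality (Lemma \ref{non-trivial-shift}), a clique-extraction lemma (Lemma \ref{large-clique}), the saturation bound of Theorem \ref{big-clique}, and the cross-intersecting theorem of Shi--Frankl--Qian to count edges meeting the complement of the clique. If your goal is a contribution rather than a restatement of the problem's history, you should target that stability statement directly; as a proof of Conjecture \ref{conj} your proposal cannot be accepted.
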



There have been recent
activities on the Erd\H{o}s Matching Conjecture, see \cite{AFHRRS,BDE,EKR,EG,E65,F13,F,F17,FLM12,FK19,FK,HLS,LM,LYY,FRR,KK}.  The Erd\H{o}s Matching Conjecture was verified by Erd\H os and Gallai \cite{EG} for $k=2$. For $k\geq 3$, it was proved by Bollob\'as, Daykin and Erd\H os \cite{BDE} for $n>2k^3s$.  Subsequently, Huang, Loh and Sudakov \cite{HLS} settled the conjecture for $n>3k^2(s+1)$. In 2013, Frankl \cite{F13} verified the conjecture for $n\geq(2s+1)k-s$. Recently, Frankl and Kupavskii \cite{FK} proved the conjecture for $n\geq \frac{5}{3}sk-\frac{2}{3}s$ and sufficiently large $s$.
As for the special case of $k=3$, Frankl, R\"odl and Ruci\'nski \cite{FRR} proved the conjecture for $n\geq 4s$.  In particular, the Erd\H{o}s Matching Conjecture  was settled  for $k = 3$ and sufficiently large $n$ in \cite{LM}, and finally, it was completely resolved  for $k=3$ in \cite{F17}.

A \emph{vertex cover} in a $k$-graph $\F$ is a set of vertices which intersects all edges of $\F$. We use $\tau(\F)$ to denote the minimum size of a vertex cover in $\F$. Bollob\'as, Daykin and Erd\H os \cite{BDE} proved a stability result of Conjecture \ref{conj} for $n>2k^3s$.

\begin{thm}[Bollob\'as, Daykin and Erd\H os \cite{BDE}]\label{bollobas}
	Let $n,s,k$ be three positive integers such that $k\geq2$ and $n> 2k^3s$. If $\F$ is a $k$-graph on $[n]$ and  $\nu(\F)\leq s<\tau(\F)$, then
	\begin{equation}\label{hmkedge}
		|\F|\leq\binom{n}{k}-\binom{n-s}{k}-\binom{n-s-k}{k-1}+1.
	\end{equation}
\end{thm}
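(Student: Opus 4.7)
The plan is to locate a ``deviant'' edge of $\F$ disjoint from a well-chosen $s$-set (guaranteed by $\tau(\F)>s$), and then count edges of $\F$ via a three-way decomposition relative to this edge. First, if $\nu(\F)<s$, then by the Erd\H{o}s Matching Conjecture, already established for $n>2k^3s$ in \cite{BDE}, one has $|\F|\le\binom{n}{k}-\binom{n-s+1}{k}$, which is strictly below the target bound for $n>2k^3s$. So I may assume $\nu(\F)=s$ and fix a maximum matching $M=\{E_1,\ldots,E_s\}$; since $M$ is maximal, $V(M)$ covers every edge of $\F$. Because $\tau(\F)>s$, no transversal of $M$ is a cover: for every choice $T\in E_1\times\cdots\times E_s$ there is an edge of $\F$ avoiding $T$. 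A pigeonhole argument over the $k^s$ transversals then yields a distinguished $s$-set $S$ (one vertex from each $E_i$) and an edge $E_0\in\F$ with $E_0\cap S=\emptyset$. After relabelling, I will take $S=[s]$ and, if necessary via a preliminary shifting reduction preserving $\nu(\F)\le s$, $E_0=\{s+1,\ldots,s+k\}$.

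Next I would split $\F$ into
\begin{align*}
\F_{11}&=\{e\in\F:e\cap[s]\ne\emptyset,\ e\cap E_0\ne\emptyset\},\\
\F_{12}&=\{e\in\F:e\cap[s]\ne\emptyset,\ e\cap E_0=\emptyset\},\\
\F_{2}&=\{e\in\F:e\cap[s]=\emptyset\}.
\end{align*}
By inclusion-exclusion on the $k$-sets meeting both $[s]$ and $E_0$,
\[|\F_{11}|\le\binom{n}{k}-\binom{n-s}{k}-\binom{n-k}{k}+\binom{n-s-k}{k}.\]
For $\F_{12}$, any matching in $\F_{12}$ extends via $E_0$ to a matching in $\F$, so $\nu(\F_{12})\le s-1$; since $\F_{12}$ lives on $[n]\setminus E_0$ of size $n-k>2k^3(s-1)$, the Erd\H{o}s Matching bound yields $|\F_{12}|\le\binom{n-k}{k}-\binom{n-s-k+1}{k}$. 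Adding these two estimates and applying the Pascal identity $\binom{n-s-k+1}{k}-\binom{n-s-k}{k}=\binom{n-s-k}{k-1}$ gives
\[|\F_{11}|+|\F_{12}|\le\binom{n}{k}-\binom{n-s}{k}-\binom{n-s-k}{k-1}.\]

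The main obstacle will be proving $|\F_2|\le 1$, which supplies the $+1$ in the target bound. I would argue that a second edge $E_0'\in\F_2$ distinct from $E_0$, together with $E_0$ and a greedy matching of $s-1$ edges passing through $[s]\setminus\{v\}$ for a suitable $v$, chosen from the large ``free'' region $[n]\setminus([s]\cup E_0\cup E_0')$, produces a matching of size $s+1$ in $\F$, contradicting $\nu(\F)=s$. The hypothesis $n>2k^3s$ is used essentially here to ensure the greedy construction succeeds, and a case analysis (according to whether $E_0\cap E_0'$ is empty or not) is required. A secondary technical difficulty is ensuring that the pigeonhole selection of $S$ in the first paragraph can be made compatible with $|\F_2|=1$; in general one may need to choose $S$ so as to minimize $|\F\setminus\A_1(n,k,|S|)|$ and handle separately the degenerate case where this minimum exceeds one.
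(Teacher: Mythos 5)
The paper does not give a proof of Theorem~\ref{bollobas}; it is quoted from Bollob\'as, Daykin and Erd\H os, so there is no in-paper argument to compare against. I will therefore assess your proposal on its own terms.

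Your reduction to $\nu(\F)=s$, the choice of a maximum matching $M$, a transversal $S=[s]$ of $M$, and an edge $E_0$ disjoint from $S$, and the three-way split into $\F_{11},\F_{12},\F_2$ are sound. The arithmetic is correct: the inclusion--exclusion bound on $\F_{11}$, the observation that $\nu(\F_{12})\le s-1$ and the resulting Erd\H os--Matching bound on $\F_{12}$ (legitimate since $n-k>2k^3(s-1)$) do add up to $\binom{n}{k}-\binom{n-s}{k}-\binom{n-s-k}{k-1}$. So the whole weight of the proof rests on $|\F_2|\le 1$, and this is exactly where the argument is not carried out.

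The sketch you give for $|\F_2|\le 1$ does not work as stated. You propose to take two edges $E_0,E_0'\in\F_2$ and complete them by $s-1$ further edges, one through each vertex of $[s]\setminus\{v\}$, ``chosen from the free region $[n]\setminus([s]\cup E_0\cup E_0')$.'' But nothing in the hypotheses guarantees that such edges exist in $\F$: the only edges of $\F$ through a transversal vertex $t_i$ could be ones that meet $E_0\cup E_0'$. This is not a pathology; it already happens in the extremal family $\B(n,k,s)$, where every edge through the vertex $s$ meets $S=\{s+1,\dots,s+k\}$, so no edge through $s$ survives deletion of $E_0=S$. Moreover, the other required ingredient --- a transversal $S$ for which $|\F_2^S|$ is small --- is genuinely delicate: for $\B(n,k,s)$ and the maximum matching consisting of $S$ together with edges through $1,\dots,s-1$, the transversal $\{1,\dots,s-1,s+1\}$ yields $|\F_2|$ on the order of $\binom{n}{k-1}$, not $1$. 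You acknowledge both issues at the end (``a case analysis is required,'' ``one may need to choose $S$ so as to minimize \dots''), but these are not technical afterthoughts --- the specification of which transversal to use and the actual derivation of a contradiction from $|\F_2|\ge 2$ are the theorem. Until that core step is supplied, the proposal establishes only the trivial part of the inequality (without the $+1$ and without the control of $\F_2$), and should be regarded as a plan with an unresolved gap rather than a proof.
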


\noindent\textbf{Remark:}
The condition (\ref{hmkedge}) is tight. Define $$\B(n,k,s):=\left\{B\in\binom{[n]}{k}:B\cap[s-1]\neq\emptyset\right\}
\cup\left\{S\right\}\cup\left\{B\in\binom{[n]}{k}:s\in B, B\cap S\neq\emptyset\right\},$$ where $S:=\{s+1,\ldots,s+k\}$. Note that $|\B(n,k,s)|
=\binom{n}{k}-\binom{n-s}{k}-\binom{n-s-k}{k-1}+1$, $\nu(\B(n,k,s))=s$ and $\tau(\B(n,k,s))=s+1$.

Theorem \ref{bollobas} shows that if $\nu(\F)\leq s$ and $|\F|>|\B(n,k,s)|$, then $\tau(\F)\leq s$. That is, $\F$ is a subgraph of $\A_1(n,k,s)$.

Frankl and  Kupavskii \cite{FK19} improved the result in Theorem \ref{bollobas} by proving that for $k\geq3$ and either $n\geq(s+\max\{25,2s+2\})k$ or $n\geq(2+o(1))sk$, where $o(1)$ is with respect to $s\rightarrow\infty$, if $\F$ is a $k$-graph with $\nu(\F)=s$ and $\tau(\F)>s$, then $|\F|\leq\binom{n}{k}-\binom{n-s}{k}-\binom{n-s-k}{k-1}+1$. For the case $s=1$, a classical result of Hilton and Milner \cite{HM} shows that for a $k$-graph $\F$, if $\nu(\F)=1$ and $\tau(\F)> 1$, then $|\F|\leq \binom{n-1}{k-1}-\binom{n-k-1}{k-1}+1$ for $n>2k$. This can be seen as a stability result of the famous Erd\H os-Ko-Rado theorem \cite{EKR}.

Frankl and Kupavskii \cite{FK19} conjectured there are several different extremal graphs for the stability problem and proposed the following conjecture.
\begin{conj}[Frankl and Kupavskii \cite{FK19}]\label{conj1}
Suppose that $\F$ is a $k$-graph on vertex set $[n]$. If $\nu(\F)=s$ and $\tau(\F)>s$, then
\begin{equation*}
   \F\leq \max\{|\A_2(n,k,s)|,\ldots,|\A_{k}(n,k,s)|,|\B(n,k,s)|\}.
\end{equation*}
\end{conj}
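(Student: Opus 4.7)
The plan is to deduce Conjecture~\ref{conj1} in the regime $ks+k \leq n \leq (1+\beta)ks+k-2$ as a corollary of the Erd\H{o}s Matching Conjecture together with an elementary binomial comparison that identifies $|\A_k(n,k,s)|$ as the dominant term on the right-hand side.

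First, I would invoke the EMC in the form proved by Frankl~\cite{F13} for $n \geq (2s+1)k-s$. Since $(2s+1)k-s-((1+\beta)ks+k-2) = s(k-1-\beta k)+2 > 0$ for $\beta \ll 1/k$, our interval lies inside Frankl's range, so every $k$-graph $\F$ with $\nu(\F) \leq s$ satisfies
$$
|\F| \leq \max\bigl\{|\A_1(n,k,s)|,\,|\A_k(n,k,s)|\bigr\}.
$$

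Second, I would verify that $|\A_k(n,k,s)| \geq |\A_1(n,k,s)|$ throughout the interval. At $n=ks+k$, a short expansion gives $|\A_k|-|\A_1| \sim (k-1)^k s^k/k!$ as $s\to\infty$; as $n$ grows to $(1+\beta)ks+k-2$, $|\A_1(n,k,s)|$ increases by at most $O(\beta s^k)$ while $|\A_k(n,k,s)|$ stays constant. The quantitative threshold needed on $\beta$ is a specific constant multiple of $1/k$, so it is guaranteed by $\beta \ll 1/k$ once $s$ is sufficiently large. Combined with the first step, this yields
$$
|\F| \leq |\A_k(n,k,s)| \leq \max\bigl\{|\A_2(n,k,s)|,\ldots,|\A_k(n,k,s)|,|\B(n,k,s)|\bigr\},
$$
which is Conjecture~\ref{conj1}. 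Notably, the hypothesis $\tau(\F)>s$ is never used at this stage.

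The sole obstacle in this deduction is the binomial comparison in the second step, which is routine once the right asymptotic expansions are set up. Where $\tau(\F)>s$ (or the non-triviality hypothesis of the paper's main theorem) truly bites is in the companion problem in which $\A_k$ is \emph{excluded} from the candidate extremal families---a situation that arises whenever $n \geq k(s+1)$, because $\A_k$ then has isolated vertices. Proving the refined bound $|\F| \leq \max\{|\A_1(n,k,s)|,\ldots,|\A_{k-1}(n,k,s)|,|\B(n,k,s)|\}$ is a genuine stability statement, not deducible from EMC alone: it requires a maximum-matching argument, cross-intersection estimates in the spirit of Frankl and Kupavskii~\cite{FK19}, and a careful case analysis of how the edges of $\F$ meet the matched vertex set and its complement.
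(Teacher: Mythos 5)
Two separate issues arise here, one about what the paper actually contains and one about your argument.

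First, the statement you were asked to prove is Conjecture~\ref{conj1}, which the paper records as an \emph{open} conjecture of Frankl and Kupavskii; the paper does not prove it, and only notes that the case $k=3$ with $n$ large was resolved by Guo, Lu and Mao \cite{GLM}. The theorem the paper actually proves, Theorem~\ref{main-thm}, concerns non-trivial families and a strictly smaller bound $\max\{|\E_0|,|\E_1|\}$ (both $\E_0$ and $\E_1$ are proper subfamilies of $\A_k$), and its proof runs through shifting, a clique-size dichotomy and the Shi--Frankl--Qian cross-intersecting theorem. So there is no ``paper's own proof'' of the conjecture to compare against.

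Second, and more importantly, your Step 1 contains a genuine logical error. You compute $(2s+1)k-s-\bigl((1+\beta)ks+k-2\bigr)=s(k-1-\beta k)+2>0$ correctly, but then conclude that the interval lies \emph{inside} Frankl's range. The sign is positive, which means $(2s+1)k-s$ exceeds the \emph{upper} endpoint $(1+\beta)ks+k-2$ of the interval. Hence every $n$ in the interval satisfies $n<(2s+1)k-s$, so the interval lies entirely \emph{below} the threshold $n\geq(2s+1)k-s$ required by Frankl's 2013 theorem \cite{F13}; that result does not apply anywhere in the regime considered. This is not a small gap---it is the opposite of what you asserted. Frankl's 2013 result is designed for the regime where $\A_1$ is dominant, precisely the regime you are not in.

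The overall strategy---reduce Conjecture~\ref{conj1} to EMC in the near-perfect regime because $\A_k$ already appears on the right-hand side---is sound, but it needs one of the EMC results that genuinely cover $n$ close to $k(s+1)$: either Frankl \cite{F} (valid for $k(s+1)\leq n\leq(k+k^{-2k-1}/2)(s+1)$) or Kolupaev--Kupavskii \cite{KK} (valid for $k(s+1)\leq n\leq(k+1/(100k))(s+1)$). Even then, one must check quantitatively that the paper's interval $[ks+k,(1+\beta)ks+k-2]$ sits inside one of these ranges, which forces $\beta\lesssim 1/(100k^2)$ (allowed by the paper's definition of $\beta\ll 1/k$, but worth stating explicitly; your phrase ``a specific constant multiple of $1/k$'' is too generous, the true threshold coming from the $|\A_k|$ vs.\ $|\A_1|$ comparison scales like $k^{-k}$). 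Finally, as you yourself observe, your argument never uses $\tau(\F)>s$, which confirms that what you are proving is simply EMC in a clique-dominant window, not the genuine stability content of Conjecture~\ref{conj1} or of Theorem~\ref{main-thm}.
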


Guo, Lu and Mao \cite{GLM} confirm Conjecture \ref{conj1} for $k=3$ and sufficiently large $n$.

A majority of the works on Erd\H os Matching Conjecture deal with the regimes when $\A_1(n,k,s)$ is larger than $\A_k(n,k,s)$. Frankl \cite{F} proved that if a $k$-graph $\F$ on $[n]$ satisfies $\nu(\F)\leq s$, then $|\F|\leq |\A_k(n,k,s)|$ for $k(s+1)\leq n\leq (k+k^{-2k-1}/2)(s+1)$. This is the first result proving Erd\H os Matching Conjecture for a range where $\A_k(n,k,s)$ is larger than $\A_1(n,k,s)$. Recently, Kolupaev and Kupavskii \cite{KK} improved the bounds by proving that Erd\H os Matching Conjecture holds for $k(s+1)\leq n\leq (k+1/(100k))(s+1)$.

Define
\begin{equation*}
	\begin{split}
\E_0(n,k,s):=\binom{[ks+k-2]}{k}\cup\left\{A\in \binom{[n]}{k}: A\setminus [ks+k-2]=\{ks+k-1\}, A\cap [k-1]\neq \emptyset\right\}\\
\cup\left\{[k-1]\cup \{x\}:ks+k\leq x\leq n\right\}
\end{split}
\end{equation*}
and
\begin{equation*}
	\begin{split}
		\E_1(n,k,s):=\binom{[ks+k-2]}{k}
		\cup\left(\bigcup_{x\in [ks+k-1,n]}\left\{A\in \binom{[n]}{k}: A\setminus [ks+k-2]=\{x\}, 1\in A\right\}\right).
	\end{split}
\end{equation*}

We followed the notation of \cite{F2020}. A $k$-graph $\F$ called \textit{non-trivial} if there is no isolated vertex in $\F$. Frankl \cite{F2020} proved the following theorem.

\begin{thm}[Frankl \cite{F2020}]\label{clique-sta}
	For any integer $k\geq 2$, there exists a positive integer $s_0$ such that the following holds. Let $s$ be a positive integer such that $s>s_0$. Suppose that $\F$ is a non-trivial $k$-graph on $(ks+k)$ vertices. If $\nu(H)\leq s$, then
	\begin{equation*}
		|\F|\leq|\E_{0}(ks+k,k,s)|.
	\end{equation*}
	
\end{thm}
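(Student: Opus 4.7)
The plan is to bootstrap from the Erd\H{o}s Matching Conjecture (EMC) in this regime, which is known by Kolupaev-Kupavskii \cite{KK} (or Frankl \cite{F}), and then to quantify the deficit imposed by non-triviality. On $n = k(s+1)$ vertices with $\nu(\F) \leq s$, EMC gives $|\F| \leq \binom{ks+k-1}{k}$, with equality only for the clique $\binom{V}{k}$ where $|V| = ks+k-1$; the non-triviality hypothesis rules out this extremum, since the vertex outside $V$ would be isolated. Since $|\E_0| = \binom{ks+k-1}{k} - \binom{ks-1}{k-1} + 1$, the task is to establish an extra deficit of size $\binom{ks-1}{k-1} - 1$.

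First I would fix any vertex $v$ of $\F$ and relabel so that $v = ks+k$. Write $\F = \F^* \cup \F_*$, where $\F^* \subseteq \binom{[n-1]}{k}$ consists of edges avoiding $v$ and $\F_* = \{A \cup \{v\} : A \in L_v\}$ consists of edges through $v$; non-triviality gives $|\F_*| \geq 1$. For each $e = A \cup \{v\} \in \F_*$, the restriction $\F^* \cap \binom{[n-1]\setminus A}{k}$ is a $k$-graph on $ks = k(s-1)+k$ vertices with matching number at most $s-1$, since an $s$-matching there would extend $e$ to an $(s+1)$-matching in $\F$. Applying EMC once more (for $\nu \leq s-1$ on $ks$ vertices) gives $|\F^* \cap \binom{[n-1]\setminus A}{k}| \leq \binom{ks-1}{k}$, so $\D_A := \binom{[n-1]\setminus A}{k} \setminus \F^*$ has at least $\binom{ks-1}{k-1}$ elements. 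Setting $\D := \binom{[n-1]}{k} \setminus \F^*$, we have $\D \supseteq \bigcup_{A \in L_v} \D_A$ and therefore $|\D| \geq \binom{ks-1}{k-1}$, which immediately yields $|\F| \leq \binom{ks+k-1}{k} - \binom{ks-1}{k-1} + |\F_*|$.

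To close the remaining gap I need to improve this to $|\D| \geq \binom{ks-1}{k-1} + |\F_*| - 1$. The key tool is the rigidity of the EMC extremal on $ks$ vertices: if $|\D_A| = \binom{ks-1}{k-1}$ (tight), then $\F^* \cap \binom{[n-1]\setminus A}{k}$ is forced to be $\binom{([n-1]\setminus A)\setminus\{w_A\}}{k}$ for some forbidden vertex $w_A$. For two distinct $A, A' \in L_v$, a case analysis on the pair $(w_A, w_{A'})$ and on the overlap $A \cap A'$ reveals that $|\D_A \cup \D_{A'}|$ strictly exceeds $|\D_A|$ by an amount on the order of $\binom{ks-k}{k-2}$, far more than the single extra missing edge required. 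The non-extremal case, in which $|\D_A| > \binom{ks-1}{k-1}$, directly provides the needed slack. Iterating across the entire link $L_v$ yields the desired lower bound on $|\D|$, and tracking equality cases pins down $\E_0$ as the unique extremum up to isomorphism.

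The main obstacle is handling these extremal and near-extremal constraints uniformly across all $A \in L_v$, in particular when some are tight and others have slack. This step likely requires either a stability version of the EMC for $\nu \leq s - 1$ on $ks$ vertices, or a Bollob\'as set-pair style argument to control the union $\bigcup_A \D_A$. A secondary subtlety is the regime where $|L_v|$ is very large (comparable to $\binom{n-1}{k-1}$): here aggregating the constraints over $A$ forces $\nu(\F^*) \leq s - 1$ directly (by taking $A$ to be the complement in $[n-1]$ of any hypothetical $s$-matching of $\F^*$), so EMC applied to $\F^*$ on $n - 1$ vertices gives $|\F^*| \leq \binom{n-2}{k}$, and a direct calculation verifies $|\F^*| + |\F_*| < |\E_0|$ for $s$ sufficiently large. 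Combining the two regimes yields the claimed bound $|\F| \leq |\E_0|$.
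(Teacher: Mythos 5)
This theorem (\ref{clique-sta}) is cited from Frankl's paper \cite{F2020} and is not proved in the present paper; the closest thing here is the proof of Theorem \ref{main-thm}, which generalizes it using a quite different route (shifting to preserve non-triviality via Lemma \ref{non-trivial-shift}, extracting a near-spanning clique via Lemma \ref{large-clique} and Theorem \ref{big-clique}, and then applying the Shi--Frankl--Qian cross-intersecting bound, Theorem \ref{t-crossintersecting}, to the links of the vertices outside the clique). Your decomposition around a single vertex $v$ is a genuinely different strategy, and the first part of it is sound: the deletion of $A\cup\{v\}$ and two applications of the Erd\H{o}s Matching Conjecture do give $|\D_A|\geq\binom{ks-1}{k-1}$ for each $A\in L_v$, hence $|\F|\leq\binom{ks+k-1}{k}-\binom{ks-1}{k-1}+|\F_*|$, which settles the case $|\F_*|=1$.

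The gap is in the step where you try to improve $|\D|\geq\binom{ks-1}{k-1}$ to $|\D|\geq\binom{ks-1}{k-1}+|\F_*|-1$. You invoke ``rigidity of the EMC extremal'' and a case analysis over pairs $(w_A,w_{A'})$, but this does not close the argument. Consider what the constraints alone allow: take $\D=\{F\in\binom{[n-1]}{k}:w\in F\}$ for a fixed $w$, so that $\F^*=\binom{[n-1]\setminus\{w\}}{k}$, and let $L_v$ be any subfamily of $\binom{[n-1]\setminus\{w\}}{k-1}$. Every restricted family $\F^*\cap\binom{[n-1]\setminus A}{k}$ is then a clique on $ks-1$ vertices, so all constraints $|\D_A|\geq\binom{ks-1}{k-1}$ are met with equality and all the ``forbidden vertices'' coincide, $w_A\equiv w$; yet $|\D|=\binom{ks+k-2}{k-1}$ is far below $\binom{ks-1}{k-1}+|L_v|-1$ when $|L_v|$ is as large as $\binom{ks+k-2}{k-1}$. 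The only thing that kills this configuration is that $w$ becomes isolated, so non-triviality is doing essential work that your sketch never makes explicit. In other words, the union bound over the $\D_A$'s cannot, by itself, give the needed deficit: you must track how repairing the isolation of $w$ (or of whichever vertex plays the role of $w$) creates new missing edges in $\D$. Without that extra argument the claimed lower bound on $|\D|$ fails, and the ``stability version of EMC'' you allude to is essentially the theorem being proved, so invoking it would be circular. The secondary ``large $|L_v|$'' regime you describe also needs more care: taking $A$ to be the complement of an $s$-matching of $\F^*$ only yields a contradiction if that particular $(k-1)$-set happens to lie in $L_v$, which is not guaranteed merely from $|L_v|$ being large.
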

This result shows that if a $k$-graph $\F$ with $(ks+k)$ vertices satisfies $\nu(\F)\leq s$ and $|\F|>|\E_{0}(ks+k,k,s)|$, then $\F$ is a subgraph of $\A_k(ks+k,k,s)$. Thus the result can be seen as a stability result on Erd\H os Matching Conjecture for the case when $\A_k(n,k,s)$ is larger than $\A_1(n,k,s)$. For $n>ks+k$, Frankl \cite{F2020} proposed the following conjecture.

\begin{conj}[Frankl \cite{F2020}]\label{stability-clique-conj}
	For any integer $\ell\geq 1$, there exists a positive integer $k_0$ such that the following holds. Let $n,s,k$ be integers such that $s\geq 5$, $k>k_0$ and $n=k(s+1)+\ell$. Suppose that $\mathcal{F}\subseteq \binom{[n]}{k}$. If $\mathcal{F}$ is non-trivial and $\nu(\mathcal{F})\leq s$, then
	\begin{equation*}
		|\mathcal{F}|\leq |\E_{0}(n,k,s)|.
	\end{equation*}
\end{conj}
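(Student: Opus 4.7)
The plan is to combine Frankl's clique-stability result (Theorem \ref{clique-sta}) at the threshold $n=ks+k$ with stability versions of the Erd\H os Matching Conjecture, and to analyze how the extra $n-ks-k$ vertices interact with an $\E_0$-like core. The guiding observation is that because $n$ is close to $k(s+1)$, any maximum matching $M$ of $\F$ leaves only $|W|=n-k|M|\geq n-ks$ vertices uncovered; the set $W$ spans no edges, and by non-triviality each $w\in W$ must lie in some edge that occupies $k-1$ vertices of $V(M)$, which already heavily restricts $\F$.

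The first step is to invoke a stability bound for EMC in this regime, for instance a variant of the Frankl--Kupavskii or Kolupaev--Kupavskii theorems combined with Theorem \ref{bollobas}, to conclude that either $|\F|$ is already far below $|\E_0(n,k,s)|$ (in which case we are done), or there is a ``core'' set $C\subseteq V(\F)$ of size at most $k(s+1)-1$ containing almost all edges of $\F$. Along the way one must be careful with shifting: the standard shifts preserve $\nu(\F)$ but can destroy non-triviality, so I would apply only those shifts that keep $\F$ non-trivial, or else track the discrepancy whenever a shift must be skipped.

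With the core identified (say $C=[k(s+1)-1]$ after relabeling), I would classify the edges of $\F$ into three types: (a) edges lying entirely in $[ks+k-2]$; (b) edges with exactly one vertex in $V\setminus[ks+k-2]$ and $k-1$ vertices in $[ks+k-2]$; (c) edges with at least two vertices in $V\setminus[ks+k-2]$. Type (c) is essentially forbidden: for any such edge $e$, after removing its at most $k-2$ core vertices from $[ks+k-2]$ there remain $\geq ks$ vertices that, by stability of type (a), still support a matching of size $s$ in $\F$; together with $e$ this yields $s+1$ disjoint edges, contradicting $\nu(\F)\leq s$. Type (a) contributes at most $\binom{ks+k-2}{k}$, matching the first term of $|\E_0(n,k,s)|$. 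The task is then to show that the type (b) edges either all pass through a single distinguished vertex (the role of $ks+k-1$) while meeting a fixed $(k-1)$-set (the role of $[k-1]$), or are precisely the pendant edges $[k-1]\cup\{x\}$ for $ks+k\leq x\leq n$.

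The main obstacle, and the most delicate part of the argument, will be ruling out ``hybrid'' configurations where type (b) edges spread across several $(k-1)$-subsets of $[ks+k-2]$ or use several boundary vertices. I plan to handle this by exchange arguments: for each deviant type (b) edge, either exhibit a missing type (a) edge that more than compensates for its contribution, or construct $s+1$ disjoint edges in $\F$. The regime $n\leq (1+\beta)ks+k-2$ is what makes these exchange arguments work, since it guarantees that the ``free'' vertex budget outside any maximum matching is at most $k$ plus a $\beta ks$-term, tightly controlled. The equality case, and hence the fact that $\F$ must be isomorphic to $\E_0(n,k,s)$ when the bound is attained, should then follow by chasing equality through the exchange inequalities.
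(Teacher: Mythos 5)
Your proposal identifies the right high-level target (a core of roughly $ks+k-2$ vertices, with edges classified by how many vertices fall outside it), but it has several genuine gaps when compared to the paper's argument, which proves Theorem~\ref{main-thm} and then derives this conjecture for large $s$ as a corollary.

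First, you never establish that the core is a complete clique, and your type-(c) exclusion tacitly depends on it. You write that after removing the $\leq k-2$ core vertices of a type-(c) edge, ``by stability of type (a), [the remaining vertices] still support a matching of size $s$''; but without knowing $\F$ restricted to the core is complete, this is unjustified. The paper earns this structure through real work: Lemma~\ref{non-trivial-shift} shows that shifts restricted to $[n-3\beta ks]$ preserve non-triviality when $|\F|\geq\binom{ks+k-2}{k}$ (this is a counting lemma, not a casual observation, and it is exactly the fix for the issue you flag but do not resolve); Lemma~\ref{large-clique} then gives $\omega(\F)>(1-\varepsilon)ks$ for shifted families; Theorem~\ref{big-clique} rules out $\omega(\F)\leq ks+k-3$; and non-triviality plus $\nu(\F)\leq s$ rules out $\omega(\F)\geq ks+k-1$. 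Only after all this is the core $U$ with $|U|=ks+k-2$ available as a genuine clique.

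Second, your ``exchange arguments'' for type-(b) edges are where essentially all of the remaining difficulty lives, and they are left as a plan rather than a proof. The paper instead notes that for $i,j\notin U$, the link families $N_\F(i),N_\F(j)\subseteq\binom{U}{k-1}$ must be cross-intersecting (two disjoint links extend to a matching of size $s+1$ inside the clique), and then applies the sharp Shi--Frankl--Qian theorem (Theorem~\ref{t-crossintersecting}) on non-empty cross-intersecting families. That single application yields the final bound; your proposal would effectively need to reprove that theorem.

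Third, and most concretely, your description of the extremal type-(b) configurations only captures $\E_0$. The cross-intersecting theorem naturally produces the bound $\max\{|\E_0(n,k,s)|,|\E_1(n,k,s)|\}$, where $\E_1$ has all boundary edges passing through a single core vertex. To conclude $|\F|\leq|\E_0(n,k,s)|$ as the conjecture asserts, one must separately verify $|\E_1(n,k,s)|\leq|\E_0(n,k,s)|$, which holds precisely when $k>\ell+3$ (and fails otherwise, so the restriction $k>k_0(\ell)$ in the conjecture is essential). The paper carries this out via explicit binomial-coefficient comparisons in the proof of the Corollary following Theorem~\ref{main-thm}. Your proposal, being unaware of the competing construction $\E_1$, would either silently prove a false statement for small $k$ or get stuck trying to rule out a configuration that is in fact sometimes optimal.
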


In this paper, we focus on the stability problem on Erd\H os Matching Conjecture for a range where $\A_k(n,k,s)$ is larger than $\A_1(n,k,s)$. By $x\ll y$ we mean that for any $y>0$ there exists $x_0>0$ such that for any $x<x_0$ the following statement holds. We omit the floor and ceiling functions when they do not affect the proof. The following theorem is our main result.

\begin{thm}\label{main-thm}
	For any integer $k\geq 3$, there exist $\beta>0$ and a positive integer $s_0$ such that the following holds. Let $n,s$ be two integers such that $s>s_0$ and $ks+k\leq n\leq (1+\beta)ks+k-2$. Suppose that $\mathcal{F}\subseteq \binom{[n]}{k}$. If $\mathcal{F}$ is non-trivial and $\nu(\mathcal{F})\leq s$, then
	\begin{equation*}
		|\mathcal{F}|\leq \max\{|\mathcal{E}_0(n,k,s)|,|\mathcal{E}_1(n,k,s)|\}.
	\end{equation*}
\end{thm}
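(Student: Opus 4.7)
The plan is to argue by contradiction: assume $|\mathcal{F}| > \max\{|\mathcal{E}_0(n,k,s)|, |\mathcal{E}_1(n,k,s)|\}$ and derive a matching of size $s+1$. The argument proceeds in three phases. First, I isolate a clique core. Since $n$ lies just above the clique threshold $ks+k-1$, the Erd\H os Matching Conjecture in this regime (settled by Kolupaev and Kupavskii \cite{KK} for $k(s+1)\leq n\leq (k+1/(100k))(s+1)$, whose range contains ours once $\beta$ is small enough) yields $|\mathcal{F}| \leq \binom{ks+k-1}{k}$. A corresponding stability statement, together with a shifting procedure adapted to preserve non-triviality (by declining to shift unique edges through boundary vertices), should then produce a set $T\subseteq [n]$ with $|T|=ks+k-2$ such that $\binom{T}{k}\subseteq \mathcal{F}$. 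This is the common core of the extremal constructions $\mathcal{E}_0$ and $\mathcal{E}_1$.

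In the second phase I determine the shape of the extras. Set $R:=[n]\setminus T$, so $|R|=n-ks-k+2\leq \beta ks$, and $\mathcal{F}_{\mathrm{ext}}:=\mathcal{F}\setminus \binom{T}{k}$; non-triviality forces every $v\in R$ to appear in some extra. Using $\binom{T}{k}\subseteq \mathcal{F}$, one deduces two shape constraints. First, every $e\in \mathcal{F}_{\mathrm{ext}}$ satisfies $|e\cap R|=1$: if $|e\cap T|\leq k-2$ then $|T\setminus e|\geq ks$, so $\binom{T\setminus e}{k}$ contains a matching of size $s$, which together with $e$ gives a forbidden $(s+1)$-matching. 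Second, any two $e_1,e_2\in \mathcal{F}_{\mathrm{ext}}$ intersect: otherwise $|T\setminus (e_1\cup e_2)|=k(s-1)$ supports a matching of size $s-1$, which together with $\{e_1,e_2\}$ yields an $(s+1)$-matching. Consequently every extra has the form $A\cup \{x\}$ with $A\in \binom{T}{k-1}$ and $x\in R$, and $\mathcal{F}_{\mathrm{ext}}$ is an intersecting family.

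The third phase classifies $\mathcal{F}_{\mathrm{ext}}$ via its links $\mathcal{L}_x:=\{A\in \binom{T}{k-1}:A\cup \{x\}\in \mathcal{F}_{\mathrm{ext}}\}$ for $x\in R$. Each $\mathcal{L}_x$ is non-empty, and by the intersecting property $\{\mathcal{L}_x\}_{x\in R}$ is pairwise cross-intersecting inside $\binom{T}{k-1}$. A Hilton--Milner-type analysis should force one of two configurations: either (a) there is a common vertex $w\in T$ contained in every $A\in \bigcup_{x\in R}\mathcal{L}_x$, in which case $|\mathcal{F}_{\mathrm{ext}}|\leq |R|\binom{ks+k-3}{k-2}$, matching the number of extras in $\mathcal{E}_1(n,k,s)$; or (b) there exist a fixed $(k-1)$-set $S\subseteq T$ and a distinguished $x^*\in R$ such that $\mathcal{L}_x=\{S\}$ for $x\in R\setminus \{x^*\}$ and $\mathcal{L}_{x^*}=\{A\in \binom{T}{k-1}:A\cap S\neq \emptyset\}$, matching the extras of $\mathcal{E}_0(n,k,s)$. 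In either case $|\mathcal{F}|\leq \max\{|\mathcal{E}_0(n,k,s)|,|\mathcal{E}_1(n,k,s)|\}$, contradicting the assumption.

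The principal obstacle is the first phase: extracting the full $(ks+k-2)$-clique from a near-extremal edge count. Existing stability theorems (Bollob\'as--Daykin--Erd\H os, Frankl--Kupavskii) apply only for $n$ much larger than $ks$, whereas our regime has $n=(1+o(1))ks$. Obtaining the embedded clique in this narrow range will likely require a bespoke argument, perhaps via a delta-system or iterative symmetrization. In addition, adapting shifting to preserve non-triviality is subtle: a standard shift can isolate a boundary vertex when its unique incident edge is rerouted, and I expect a compensating mechanism that tracks such exceptional edges will be needed.
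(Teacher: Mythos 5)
Your Phase 2 (shape constraints on the extras: every extra edge meets $R$ in exactly one vertex, and the links $\{\mathcal{L}_x\}_{x\in R}$ are pairwise cross-intersecting) is exactly the argument the paper uses once the clique $T$ of size $ks+k-2$ is in hand. Your Phase 3 can be replaced cleanly by the Shi--Frankl--Qian theorem on sums of non-empty cross-intersecting families (Theorem~\ref{t-crossintersecting}): with $t=|R|=n-ks-k+2$ non-empty cross-intersecting families in $\binom{T}{k-1}$, $|T|=ks+k-2\geq 2(k-1)$, the theorem yields
\[
\sum_{x\in R}|\mathcal{L}_x|\leq \max\Bigl\{\binom{ks+k-2}{k-1}-\binom{ks-1}{k-1}+t-1,\ t\binom{ks+k-3}{k-2}\Bigr\},
\]
and the two quantities are exactly the extras of $\mathcal{E}_0$ and $\mathcal{E}_1$, so no ``Hilton--Milner-type'' case analysis has to be carried out by hand.

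The genuine gap is Phase 1, and you identify it correctly but leave it unresolved. In this regime $n=(1+o(1))ks$, so none of the classical stability theorems applies, and neither does the raw Kolupaev--Kupavskii edge bound tell you anything structural about the family. The paper closes this gap in three steps. (a) Lemma~\ref{non-trivial-shift} is precisely the ``compensating mechanism'' you anticipated: shifts are restricted to the initial segment $[n-3\beta ks]$, and a double-counting of missing $(k-1)$-sets shows that if some $(x,y)$-shift with $y\leq n-3\beta ks$ isolated $y$, the number of missing sets would exceed $\binom{n}{k}-\binom{ks+k-2}{k}$, contradicting the assumed edge count; hence these shifts never destroy non-triviality. (b) Lemma~\ref{large-clique} then exploits shiftedness: if $\omega(\mathcal{F})<(1-\varepsilon)ks$ the top $\varepsilon ks$ vertices span no edge, and the resulting deficit $\binom{|S|}{k}$ exceeds what the edge count allows. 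This only gives $\omega(\mathcal{F})>(1-\varepsilon)ks$, not the full clique you want. (c) The hard step, which your proposal does not address, is ruling out the intermediate range $(1-\varepsilon)ks\leq\omega(\mathcal{F})\leq ks+k-3$. The paper first observes that non-triviality alone forbids $\omega(\mathcal{F})\geq ks+k-1$, then saturates $\mathcal{F}$ and invokes Theorem~\ref{big-clique} (from Guo--Lu--Peng), which shows that an $s$-saturated family with clique number stuck strictly between $(1-\varepsilon)ks$ and $ks+k-3$ has at most $\binom{ks+k-1}{k}-3.98\binom{(1-\varepsilon)ks}{k-1}+(2+8\varepsilon k^4)\binom{n}{k-1}<\binom{ks+k-2}{k}$ edges, a contradiction. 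Without some such ``clique-boosting'' input the argument does not close, and this is where the bulk of the technical content lives.
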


Theorem \ref{main-thm} implies the following result, which confirms Conjecture \ref{stability-clique-conj} for sufficiently large $s$.
\begin{cor}
		For any integers $k\geq 3$ and $\ell\geq 1$, there exists a positive integer $s_0$ such that the following holds. Let $n,s$ be two integers such that $s>s_0$ and $n=k(s+1)+\ell$. Suppose that $\mathcal{F}\subseteq \binom{[n]}{k}$. If $\mathcal{F}$ is non-trivial and $\nu(\mathcal{F})\leq s$, then
		\begin{equation*}
			|\F|\leq \left\{
			\begin{aligned}
				&|\E_{0}(n,k,s)|, \ \text{if} \ k> \ell+3;\\
				&|\E_{1}(n,k,s)|, \ \text{if} \ k\leq \ell+3.
			\end{aligned}
			\right.
		\end{equation*}
\end{cor}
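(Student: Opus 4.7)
The plan is to bootstrap from Frankl's Theorem~\ref{clique-sta} (which settles the $n=ks+k$ case) by carefully handling the at most $\beta ks$ ``excess'' vertices. Writing $T:=[ks+k-2]$ and $U:=[ks+k-1,n]$ so that $|U|\leq \beta ks+1$, both extremal families $\E_0$ and $\E_1$ share the clique $\binom{T}{k}$ and differ only in how edges reach into $U$. The strategy is therefore: first prove that a near-extremal $\F$ must itself contain (essentially) the clique $\binom{T}{k}$ after relabeling, and then classify the edges touching $U$ by dichotomizing into the $\E_0$-type and $\E_1$-type configurations.

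First, I would apply a controlled left-shifting to $\F$ that preserves $|\F|$, $\nu(\F)$, and non-triviality (the last by restricting shifts that would create an isolated vertex, or treating those boundary cases directly). For any $(ks+k)$-subset $X\su [n]$ the restriction $\F[X]$ has matching number at most $s$, and after removing any isolated vertices of $\F[X]$ one can feed it into Theorem~\ref{clique-sta}; this yields, for the typical $X$, either a tight bound $|\F[X]|\leq |\E_0(ks+k,k,s)|$ or the conclusion that $\F[X]$ is a subgraph of $\A_k(ks+k,k,s)$. A simple averaging argument across all choices of $X$, combined with the Erd\H os Matching Conjecture in this regime (Kolupaev--Kupavskii \cite{KK} applies since $n\leq (1+\beta)ks+k-2\leq (k+\tfrac{1}{100k})(s+1)$ for $\beta$ small enough), forces the existence of a $(ks+k-2)$-set $T$ with $\binom{T}{k}\su \F$; after relabeling take $T=[ks+k-2]$.

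Next, write $\F=\F_0\cup \F_1$ with $\F_0=\binom{T}{k}\cap \F=\binom{T}{k}$ and $\F_1=\F\setminus \binom{T}{k}$, so each edge of $\F_1$ meets $U$. Fix a perfect matching of $\binom{T}{k}$ minus a residual $(k-2)$-set $R\su T$; such a matching has $s$ edges. Any $e\in \F_1$ must fail to extend this matching to $s+1$ disjoint edges, which forces $|e\cap R|\geq 1$ or pins $e$ to contain a fixed vertex of $T$. Iterating this for many choices of the residual $(k-2)$-set and for different base matchings (which is where the shifted structure and non-triviality both come in) reduces the possibilities for $\F_1$ to exactly two canonical forms: every edge of $\F_1$ contains $[k-1]$ (leading to $\E_0$) or every edge of $\F_1$ contains a single fixed vertex together with a forced intersection pattern (leading to $\E_1$). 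Non-triviality of $\F$ guarantees each $v\in U$ is covered, so $\F_1$ is non-empty and the classification applies to every $v\in U$ simultaneously; the outcome is uniform across $U$, producing the $\E_0$ or $\E_1$ structure on the nose.

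The main obstacle I expect is the first step, namely upgrading Theorem~\ref{clique-sta} from $n=ks+k$ to the slightly larger range with a \emph{structural} (not merely numerical) conclusion: averaging alone gives many good $(ks+k)$-windows, but assembling them into a single global clique $\binom{T}{k}\su \F$ requires handling the cases where different windows identify different candidate cores, and controlling the error when $\F[X]$ falls into the $\A_k$-subgraph alternative rather than the $\E_0$-bound alternative. A secondary technical point is maintaining non-triviality under shifting, which in principle can be violated only in boundary situations that one analyses directly by exhibiting a $v\in U$ of degree $1$ and tracing its unique edge through the classification of Step~3.
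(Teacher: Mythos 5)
Your proposal never invokes Theorem~\ref{main-thm}, which is the engine the paper uses. The paper's proof of this corollary is short: for fixed $\ell$ and $s$ large, $n=k(s+1)+\ell$ satisfies $ks+k\leq n\leq(1+\beta)ks+k-2$ (since $\beta ks\geq\ell+2$ eventually), so Theorem~\ref{main-thm} already gives $|\F|\leq\max\{|\E_0(n,k,s)|,|\E_1(n,k,s)|\}$. The rest of the paper's proof is pure arithmetic: one computes $|\E_0(n,k,s)|=\binom{ks+k-2}{k}+\binom{ks+k-2}{k-1}-\binom{ks-1}{k-1}+\ell+1$ and $|\E_1(n,k,s)|=\binom{ks+k-2}{k}+(\ell+2)\binom{ks+k-3}{k-2}$ and then verifies $|\E_0|>|\E_1|$ when $k>\ell+3$ and $|\E_1|\geq|\E_0|$ when $k\leq\ell+3$.

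That comparison is the genuine content of the corollary, and your proposal never addresses it. Even if your three-step argument delivered the bound $|\F|\leq\max\{|\E_0|,|\E_1|\}$ (or a structural classification of $\F$ as $\E_0$-like or $\E_1$-like), you would still not know which of the two dominates for a given $(k,\ell)$, which is exactly what the statement asserts; the split at $k=\ell+3$ is a binomial inequality, not a consequence of any structural dichotomy in $\F$. Separately, the structural part of your plan is incomplete in the way you yourself flag: upgrading Theorem~\ref{clique-sta} from $n=ks+k$ to $n=ks+k+\ell$ by averaging over $(ks+k)$-windows and assembling a single global $(ks+k-2)$-clique is not a small technical step but essentially a re-proof of Theorem~\ref{main-thm}, which the paper carries out via Lemmas~\ref{large-clique} and~\ref{non-trivial-shift} and Theorem~\ref{big-clique}. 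The correct move for the corollary is simply to cite Theorem~\ref{main-thm} and then do the binomial comparison.
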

\pf
By Theorem \ref{main-thm}, there exists a positive integer $s_0'$ such that $|\mathcal{F}|\leq \max_{i\in\{0,1\}}\{|\mathcal{E}_i(n,k,s)|\}$ for $s>s_0'$. Note that
\begin{equation*}
	|\mathcal{E}_0(n,k,s)|=\binom{ks+k-2}{k}+\binom{ks+k-2}{k-1}-\binom{ks-1}{k-1}+\ell+1
\end{equation*}
and
\begin{equation*}
	|\mathcal{E}_1(n,k,s)|=\binom{ks+k-2}{k}+(\ell+2)\binom{ks+k-3}{k-2}.
\end{equation*}
For $k> \ell+3$, one can see that there exists an integer $s_0''$ such that for $s>s''_0$,
\begin{equation*}
	\begin{split}
		|\mathcal{E}_0(n,k,s)|-|\mathcal{E}_1(n,k,s)|&=\binom{ks+k-2}{k-1}-\binom{ks-1}{k-1}+\ell+1-\left(\ell+2\right)\binom{ks+k-3}{k-2}\\
		&> (k-1)\binom{ks-1}{k-2}-\left(\ell+2\right)\binom{ks+k-3}{k-2}\\
		&= \binom{ks-1}{k-2}-\left((\ell+2)\binom{ks+k-3}{k-2}-(k-2)\binom{ks-1}{k-2}\right)\\
		&\geq\binom{ks-1}{k-2}-(k-2)\left(\binom{ks+k-3}{k-2}-\binom{ks-1}{k-2}\right)\\
		&>\binom{ks-1}{k-2}-(k-2)^2\binom{ks+k-3}{k-3}\\
		&>0.
	\end{split}
\end{equation*}

For $k\leq \ell+3$, one can see that 
\begin{equation*}
	\begin{split}
		&|\mathcal{E}_1(n,k,s)|-|\mathcal{E}_0(n,k,s)|\\
		=&(\ell+2)\binom{ks+k-3}{k-2}-\left(\binom{ks+k-2}{k-1}-\binom{ks-1}{k-1}+\ell+1\right)\\
		=&(\ell+2)\binom{ks+k-3}{k-2}-\left(\sum_{i=1}^{k-1}\binom{ks+k-2-i}{k-2}+\ell+1\right) \\
		=&((\ell+2)-(k-1))\left(\binom{ks+k-3}{k-2}-1\right)+\sum_{i=1}^{k-1}\left(\binom{ks+k-3}{k-2}-\binom{ks+k-2-i}{k-2}\right)-(k-2)\\
		\geq& 0
	\end{split}
\end{equation*}
holds for all $k\geq 3$ and $s\geq 1$. Let $s_0=\max\{s_0',s_0''\}$, then
\begin{equation*}
	|\F|\leq \left\{
	\begin{aligned}
		&|\E_{0}(n,k,s)|, \ \text{if} \ k> \ell+3;\\
		&|\E_{1}(n,k,s)|, \ \text{if} \ k\leq \ell+3.
	\end{aligned}
	\right.
\end{equation*}
for $s>s_0$.
\qed


The following corollary can be seen as a stability result on Erd\H os Matching Conjecture for a range where $\A_k(n,k,s)$ is larger than $\A_1(n,k,s)$. Let $\mathcal{F}\subset \binom{[n]}{k}$ be a $k$-graph. For a subset $S\subseteq [n]$, we use $\F[S]$ to denote the sub-hypergraph with vertex set $S$ and edge set $\{F\in \F : F\subseteq S\}$.
\begin{cor}
	For any integer $k\geq 3$, there exist $\beta>0 $ and a positive integer $s_0$ such that the following holds. Let $n,s$ be two integers such that $s>s_0$ and $ks+k\leq n\leq (1+\beta)ks+k-2$. Suppose that $\mathcal{F}\subseteq \binom{[n]}{k}$. If $\nu(\mathcal{F})\leq s$ and
	\begin{equation*}
		|\mathcal{F}|> \max\{|\mathcal{E}_0(n,k,s)|,|\mathcal{E}_1(n,k,s)|\},
	\end{equation*}
	then $\F$ is a subgraph of $\A_k(n,k,s)$.
\end{cor}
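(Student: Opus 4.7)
My plan is to derive this corollary as an essentially immediate consequence of Theorem~\ref{main-thm}. The first observation is that Theorem~\ref{main-thm} produces its upper bound only under the non-triviality hypothesis, so the assumption $|\F| > \max\{|\E_0(n,k,s)|, |\E_1(n,k,s)|\}$ forces $\F$ to have at least one isolated vertex. Let $V' \subseteq [n]$ be the set of non-isolated vertices and put $n' := |V'|$, so that $n' \leq n-1$ and every edge of $\F$ lies inside $V'$.

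The argument then splits according to the size of $V'$. If $n' \leq ks+k-1$, I simply relabel the vertices so that $V' \subseteq [ks+k-1]$; then $\F \subseteq \binom{[ks+k-1]}{k} = \A_k(n,k,s)$ and we are done. If instead $n' \geq ks+k$, the hypotheses of Theorem~\ref{main-thm} apply to $\F$ viewed as a $k$-graph on $V'$: indeed $\F$ is non-trivial on $V'$, $\nu(\F) \leq s$, and $ks+k \leq n' \leq n \leq (1+\beta)ks+k-2$. The theorem then yields
\[
|\F| \leq \max\{|\E_0(n',k,s)|, |\E_1(n',k,s)|\}.
\]

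To close the argument I would verify monotonicity of $|\E_i(m,k,s)|$ in $m$ for $m \geq ks+k-1$ and $i \in \{0,1\}$. This is immediate from the definitions: passing from $m-1$ to $m$ adds the single edge $[k-1]\cup\{m\}$ to $\E_0$ and exactly $\binom{ks+k-3}{k-2}$ new edges (those containing $1$ and $m$) to $\E_1$. Combining this monotonicity with the previous display gives $|\F| \leq \max\{|\E_0(n,k,s)|, |\E_1(n,k,s)|\}$, contradicting the assumed strict lower bound, so this second case cannot arise. The whole argument is essentially a bookkeeping consequence of Theorem~\ref{main-thm}, and I do not foresee any genuine obstacle beyond the elementary monotonicity check and the care to verify that $n'$ still lies in the admissible range of the theorem.
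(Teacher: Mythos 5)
Your proposal is correct and takes essentially the same approach as the paper: both remove isolated vertices, observe that if fewer than $ks+k$ vertices remain then $\F$ sits inside a clique of size $ks+k-1$ and is thus a subgraph of $\A_k(n,k,s)$, and otherwise apply Theorem~\ref{main-thm} to the restriction of $\F$ to its non-isolated vertices together with monotonicity of $|\E_i(m,k,s)|$ in $m$ to reach a contradiction. The paper's write-up is terser (it phrases the case split via the contrapositive and leaves the monotonicity implicit), but the logical content is identical.
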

\pf
Let $I$ be the set of isolated vertices in $\F$ and $S=[n]\setminus I$. Suppose that $\F$ is not a subgraph of $\A_k(n,k,s)$, then $|I|\leq n-(ks+k)$. Thus $\F[S]$ is a non-trivial $k$-graph on $S$ with $n\geq |S|\geq ks+k$. By Theorem \ref{main-thm}, there exists an integer $s_0$ such that
$$|\F|= |\F[S]|\leq \max_{i\in\{0,1\}}\{|\mathcal{E}_i(|S|,k,s)|\}\leq \max_{i\in\{0,1\}}\{|\mathcal{E}_i(n,k,s)|$$ for $s>s_0$, a contradiction.
\qed

\section{shifting}

Let $\F$ be a $k$-graph on vertex set $[n]$. For vertices $1\leq x<y\leq n$, we define the \emph{$(x,y)$-shift} $S_{x,y}$ by $S_{x,y}(\F)=\{S_{x,y}(F):F\in \F\}$, where
$$S_{x,y}(F)=\begin{cases}
	F\setminus \{y\}\cup \{x\}, & \text{if $y\in F$, $x\notin F$ and $F\setminus \{y\}\cup \{x\}\notin \F$;}\\
	F, &\text{otherwise.}
\end{cases}$$
The following well-known result can be found in \cite{F95}.
\begin{lm}\label{shift-prop}
	The $(x,y)$-shift satisfies the following properties.
	\begin{enumerate}[itemsep=0pt,parsep=0pt,label=$($\roman*$)$]
		\item $|\F|=|S_{x,y}(\F)|$ and $|F|=|S_{x,y}(F)|$ for any $F\in \F$,
		\item $\nu(S_{x,y}(\F))\leq \nu(\F)$.
	\end{enumerate}
\end{lm}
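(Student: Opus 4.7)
The plan is to verify the two assertions separately: part (i) as a counting/injectivity statement, and part (ii) by transferring an arbitrary matching in $S_{x,y}(\F)$ back to one of the same size in $\F$. For the first half of (i), $|F|=|S_{x,y}(F)|$ is immediate from the definition, since $S_{x,y}$ either fixes $F$ or merely swaps $y$ for $x$. To obtain $|\F|=|S_{x,y}(\F)|$, I would show that $S_{x,y}\colon\F\to S_{x,y}(\F)$ is injective. If $F_1\neq F_2$ in $\F$ satisfy $S_{x,y}(F_1)=S_{x,y}(F_2)$, a case split (both fixed; both moved; exactly one moved) resolves the question quickly. The only subtle case is where $F_1$ is moved onto $F_2$ while $F_2$ is fixed: the shifting rule applied to $F_1$ would then require $F_2=F_1\setminus\{y\}\cup\{x\}\notin\F$, contradicting $F_2\in\F$.

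For part (ii), let $M'$ be a maximum matching in $S_{x,y}(\F)$ of size $m=\nu(S_{x,y}(\F))$, and partition $M'=M_0'\sqcup M_1'$ with $M_0':=M'\cap\F$ and $M_1':=M'\setminus\F$. Every edge of $M_1'$ must contain $x$ but not $y$ (otherwise it would already lie in $\F$), so $|M_1'|\le 1$ since $M'$ is a matching. If $M_1'=\emptyset$ then $M'\subseteq\F$ and we are done, so write $M_1'=\{F'\}$; the definition of $S_{x,y}$ then supplies $F:=F'\setminus\{x\}\cup\{y\}\in\F$, and the natural candidate matching in $\F$ is $M_0'\cup\{F\}$.

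The sole obstacle is the possibility that some edge $F_y'\in M_0'$ already contains $y$, which would destroy disjointness with $F$. I would resolve this by exploiting that $S_{x,y}$ fixed $F_y'$ despite $y\in F_y'$ and $x\notin F_y'$: by the shifting rule this forces $F_y'\setminus\{y\}\cup\{x\}\in\F$. Replacing $F_y'$ with $F_y:=F_y'\setminus\{y\}\cup\{x\}$ in the candidate gives $\bigl(M_0'\setminus\{F_y'\}\bigr)\cup\{F,F_y\}\subseteq\F$, again of size $m$. Disjointness is preserved because $F$ and $F_y$ inherit disjointness from $F'$ and $F_y'$ via the symmetric swap of $x$ and $y$, and no other edge of $M_0'$ contains $x$ (as $F'$ was the unique edge of $M'$ containing $x$), so introducing $x$ into $F_y$ creates no new conflict. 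This yields $\nu(\F)\ge m$, completing the proof.
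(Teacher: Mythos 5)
Your proof is correct. The paper does not reprove this lemma but cites it as a well-known fact from Frankl's Handbook chapter (\cite{F95}); the argument you give — injectivity of $S_{x,y}$ on $\F$ for part (i), and for part (ii) pulling a maximum matching $M'$ of $S_{x,y}(\F)$ back to $\F$ by swapping $x$ and $y$ in the at-most-one shifted edge and, if necessary, in the unique edge of $M'\cap\F$ containing $y$ — is exactly the standard proof that source gives, with all the edge cases (uniqueness of the edge through $x$, the fact that a fixed edge through $y$ but not $x$ must have its shifted image already in $\F$) handled correctly.
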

For a $k$-graph $\F$, define $N_{\F}(x)=\{F\setminus\{x\}:x\in F\in \F\}$, $N_{\F}(x,y)=\{F\setminus\{x,y\}:x,y\in F\in \F\}$ and $N_{\F}(x,\overline{y})=\{F\setminus\{x\}:x\in F\in \F, y\notin F\}$.



\begin{pro}[Frankl \cite{F2020}]\label{shift-stop}
	Suppose that $\F\subseteq \binom{[n]}{k}$, then the following hold.
	\begin{enumerate}[itemsep=0pt,parsep=0pt,label=$($\roman*$)$]
		\item If $|N_{\F}(x)|\geq |N_{\F}(y)|$ and $\widetilde{\mathcal{F}}:=S_{x,y}(\F)\neq \F$, then
		\begin{equation}\label{meaningful-shifts}
			\sum_{i\in [n]}|N_{\widetilde{\mathcal{F}}}(i)|^2>\sum_{i\in [n]}|N_{\F}(i)|^2.
		\end{equation}
		\item If $\F$ is non-trivial and $\widetilde{\mathcal{F}}=S_{x,y}(\F)$ has isolated vertex, then $|N_{\F}(x,y)|=\emptyset$ and $|N_{\F}(x,\overline{y})|+|N_{\F}(y,\overline{x})|\leq \binom{n-2}{k-1}$.
	\end{enumerate}
\end{pro}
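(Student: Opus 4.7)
The plan is to localize everything to the pair $\{x,y\}$. I would set $a=|N_{\F}(x,\overline{y})|$, $b=|N_{\F}(y,\overline{x})|$, $c=|N_{\F}(x,y)|$, so that $|N_{\F}(x)|=a+c$ and $|N_{\F}(y)|=b+c$, and let $t$ be the number of edges actually moved by the $(x,y)$-shift. Two elementary observations then do all the work. First, an edge $F$ with $y\in F$, $x\notin F$ gets moved exactly when $(F\setminus\{y\})\cup\{x\}\notin\F$, so $t=|N_{\F}(y,\overline{x})\setminus N_{\F}(x,\overline{y})|\leq b$. Second, for any $i\neq x,y$ the swap $y\leftrightarrow x$ preserves whether $i\in F$, hence $|N_{\widetilde{\F}}(i)|=|N_{\F}(i)|$, while $|N_{\widetilde{\F}}(x)|=a+c+t$ and $|N_{\widetilde{\F}}(y)|=b+c-t$.

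For (i), the hypothesis $|N_{\F}(x)|\geq|N_{\F}(y)|$ reads $a\geq b$, and $\widetilde{\F}\neq\F$ gives $t\geq 1$. All squares with index outside $\{x,y\}$ cancel, and the two remaining squares contribute
\begin{equation*}
(a+c+t)^{2}+(b+c-t)^{2}-(a+c)^{2}-(b+c)^{2}=2t(a-b)+2t^{2}>0,
\end{equation*}
which is exactly (\ref{meaningful-shifts}).

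For (ii), I would first argue which vertex can become isolated: non-triviality of $\F$ keeps every $i\neq x,y$ at positive degree, and $|N_{\widetilde{\F}}(x)|\geq|N_{\F}(x)|\geq 1$, so only $y$ can be newly isolated. The equation $b+c-t=0$, combined with $t\leq b$ from the first observation, forces $c=0$ and $b=t$. The former is the claim $N_{\F}(x,y)=\emptyset$, and the latter says that $N_{\F}(x,\overline{y})$ and $N_{\F}(y,\overline{x})$ are disjoint subfamilies of $\binom{[n]\setminus\{x,y\}}{k-1}$, which immediately yields the bound $\binom{n-2}{k-1}$.

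I do not expect any real obstacle. The only step that calls for care is the bookkeeping identity $t=|N_{\F}(y,\overline{x})\setminus N_{\F}(x,\overline{y})|$ together with the observation that shifting does not change degrees at vertices other than $x$ and $y$; once those are firmly in place, both assertions follow by direct inspection of the four quantities $a,b,c,t$.
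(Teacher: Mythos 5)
Your proof is correct, and the bookkeeping is clean. The identity $t=|N_{\F}(y,\overline{x})\setminus N_{\F}(x,\overline{y})|$ is right: for an edge $F$ with $y\in F$, $x\notin F$, the replacement set $(F\setminus\{y\})\cup\{x\}$ is in $\F$ precisely when $F\setminus\{y\}\in N_{\F}(x,\overline{y})$, so the moved edges are in bijection with $N_{\F}(y,\overline{x})\setminus N_{\F}(x,\overline{y})$. The observation that $|N_{\widetilde{\F}}(i)|=|N_{\F}(i)|$ for $i\neq x,y$ follows because the shift map $\F\to\widetilde{\F}$ is a bijection that only exchanges membership of $x$ and $y$. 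With $|N_{\widetilde{\F}}(x)|=a+c+t$ and $|N_{\widetilde{\F}}(y)|=b+c-t$, the computation $2t(a-b)+2t^2>0$ for (i) and the forcing $c=0$, $t=b$ for (ii) are both exact; in (ii) the disjointness of $N_{\F}(x,\overline{y})$ and $N_{\F}(y,\overline{x})$ inside $\binom{[n]\setminus\{x,y\}}{k-1}$ gives the bound $\binom{n-2}{k-1}$. The paper does not reproduce a proof (it cites Frankl), but your argument is the natural and standard one for this kind of shift-degree calculation, and I see no gap.
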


Let us say that the $(x,y)$-shift $S_{x,y}$ is \emph{meaningful} if (\ref{meaningful-shifts}) holds. Noting the obvious upper bound
$$\sum_{x\in[n]}|N_{\F}(x)|^2\leq n|\F|^2,$$
one concludes that there can be only a \emph{limited} number of successive meaningful shifts. For a subset $Y\subset[n]$, we say that $\F\subseteq\binom{[n]}{k}$ is \emph{shifted} on $Y$ if $S_{i,j}(\F)=\F$ holds for all $1\leq i<j\leq n$, $\{i,j\}\subset Y$. It is not difficult to see that if $\F$ is shifted on $Y$, then for any subsets $\{u_1,\ldots,u_k\}, \{v_1,\ldots,v_k\}\subset Y$ such that $u_i\leq v_i$ for $i\in[k]$, $\{v_1,\ldots,v_k\}\in \F$ implies $\{u_1,\ldots,u_k\}\in \F$. Let $\omega(\F)$ be the number of vertices in the largest complete subgraph of $\F$.
\begin{lm}\label{large-clique}
	For any integer $k\geq 3$ and positive real $\varepsilon< 1/k$, there exists $s_0(k,\varepsilon)$ such that the following holds. Let $0<\eta<\frac{\varepsilon^k}{3^{k+2}k!}$ and $n,s$ be positive integers such that $s>s_0$ and $(1-\eta)ks\leq n\leq (1+\eta)ks+k-2$. Suppose that $\F\subseteq\binom{[n]}{k}$ and $|\F|>\binom{ks+k-2}{k}-3\eta (ks)^k$. If $\F$ is shifted on $[n]$, then $\omega(\F)\geq (1-\varepsilon)ks $.
\end{lm}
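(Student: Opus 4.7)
The plan is a short proof by contradiction, driven entirely by the shifted structure and one edge count. I assume $\omega(\F)<(1-\varepsilon)ks$ and derive a bound $|\F|\leq \binom{ks+k-2}{k}-3\eta(ks)^k$ that contradicts the hypothesis.

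Set $m:=\lceil(1-\varepsilon)ks\rceil$, so that $\omega(\F)\leq m-1$ and $\binom{[m]}{k}\not\subseteq\F$. Since $\F$ is shifted on $[n]$, the componentwise-maximal $k$-subset $L:=\{m-k+1,\ldots,m\}$ of $[m]$ cannot lie in $\F$: otherwise every $T=\{u_1<\cdots<u_k\}\subseteq[m]$ (which satisfies $u_i\leq m-k+i$) would be forced into $\F$, giving $\omega(\F)\geq m$. Applying shiftedness once more, any $\{v_1<\cdots<v_k\}\in\F$ with $v_i\geq m-k+i$ for every $i$ would similarly force $L\in\F$; this condition on sorted $k$-tuples is equivalent to $v_1\geq m-k+1$. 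Thus no $k$-subset of $\{m-k+1,\ldots,n\}$ lies in $\F$, and we obtain
\begin{equation*}
|\F|\leq \binom{n}{k}-\binom{n-m+k}{k}.
\end{equation*}

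The remaining step compares this bound against the hypothesis $|\F|>\binom{ks+k-2}{k}-3\eta(ks)^k$. From $m\leq (1-\varepsilon)ks+1$ and $n\geq(1-\eta)ks$, together with $\eta\ll\varepsilon$ (forced by $\eta<\varepsilon^k/(3^{k+2}k!)$ and $\varepsilon<1$), I get $n-m+k\geq\varepsilon ks/2$ once $s$ is large, whence $\binom{n-m+k}{k}\geq \varepsilon^k(ks)^k/(3^k k!)$. In the other direction, $\binom{n}{k}-\binom{ks+k-2}{k}$ telescopes into at most $n-ks-k+2\leq\eta ks$ terms, each at most $\binom{n-1}{k-1}\leq(2ks)^{k-1}/(k-1)!$, yielding a bound of the form $(k\cdot 2^{k-1}/k!)\,\eta(ks)^k$. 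Substituting, the contradiction assumption produces
\begin{equation*}
\eta>\frac{\varepsilon^k}{3^k\bigl(k\cdot 2^{k-1}+3\cdot k!\bigr)},
\end{equation*}
which contradicts $\eta<\varepsilon^k/(3^{k+2}k!)$ exactly when $k\cdot 2^{k-1}\leq 6\cdot k!$, an elementary inequality valid for all $k\geq 3$.

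The structural combinatorics is essentially trivial: shifting supplies one pivot edge $L$ whose absence wipes out a solid $\binom{n-m+k}{k}$-block of potential edges. The main (if minor) obstacle is the arithmetic bookkeeping of the last step — one has to verify that the crude factorial-and-power estimates fit inside the $3^{k+2}k!$ slack that the hypothesis on $\eta$ provides. That slack appears calibrated for precisely such estimates, so the check is routine, but it is where all the care is needed.
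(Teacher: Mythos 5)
Your proof is correct and takes essentially the same approach as the paper's: assume $\omega(\F)<(1-\varepsilon)ks$, use shiftedness to show that $\F$ has no edge confined to the top $\Theta(\varepsilon ks)$ vertices (the paper does this via $\F[S]=\emptyset$ for $S=[n]\setminus[(1-\varepsilon)ks]$; you do it via the pivot edge $L$, a slightly sharper version of the same observation), and then compare $\binom{n}{k}-\binom{\Theta(\varepsilon ks)}{k}$ against the hypothesis $|\F|>\binom{ks+k-2}{k}-3\eta(ks)^k$ to force $\eta$ above a threshold incompatible with $\eta<\varepsilon^k/(3^{k+2}k!)$. The closing arithmetic matches the paper's (which lands on $\varepsilon^k/(3^k k!)\leq 5\eta$); your constant $3^k(k\cdot 2^{k-1}+3k!)$ equals the paper's $5\cdot 3^k k!$ at $k=3$ and is sharper for $k\geq 4$, so both give the same contradiction.
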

\pf
Suppose that $\omega(\F)<(1-\varepsilon)ks$. Let $S=[n]\setminus [(1-\varepsilon)ks]$, then $\F[S]=\emptyset$ since $\F$ is shifted on $[n]$. Thus $|\F|<\binom{n}{k}-\binom{|S|}{k}$. Note that $|S|\geq n-(1-\varepsilon)ks> (\varepsilon-\eta) ks\geq \varepsilon ks/2$. Combining with $|\F|>\binom{ks+k-2}{k}-3\eta (ks)^k$, we have $\binom{n}{k}-\binom{|S|}{k}>\binom{ks+k-2}{k}-3\eta (ks)^k$. That is
\begin{equation}\label{inequality1}
	\binom{n}{k}-\binom{ks+k-2}{k}+3\eta (ks)^k>\binom{|S|}{k}.
\end{equation}
Since
\begin{equation*}
	\begin{split}
		\binom{n}{k}-\binom{ks+k-2}{k}\leq &\binom{(1+\eta)ks+k-2}{k}-\binom{ks+k-2}{k}\\
		\leq &\eta ks\binom{(1+\eta)ks+k-2}{k-1}\\
		\leq &\eta ks\binom{(1+2\eta)ks}{k-1}\\
		\leq&\frac{(1+2\eta)^{k-1}\eta(ks)^{k}}{(k-1)!}\\
		\leq & \frac{2^{k-1}\eta(ks)^{k}}{(k-1)!}
	\end{split}
\end{equation*}
and
$$\binom{|S|}{k}\geq \frac{(\varepsilon ks/2-k)^k}{k!}\geq \frac{\varepsilon^k(ks)^k}{3^kk!},$$
inequality (\ref{inequality1}) implies
$$\frac{\varepsilon^k}{3^kk!}\leq \frac{2^{k-1}\eta}{(k-1)!}+3\eta\leq 5\eta,$$
a contradiction.
\qed

Using some ideas from \cite{F2020}, we prove that if the number of edges in $\F$ is large enough, then we can perform shifts on most of vertices without destroying non-triviality.
\begin{lm}\label{non-trivial-shift}
	For any integer $k\geq 3$ and positive real $\beta\leq \frac{1}{9k}$, there exists $s_0(k,\beta)$ such that the following holds. Let $n,s$ be integers such that $s>s_0$ and $ks+k\leq n\leq (1+\beta)ks+k-2$. Suppose that $\mathcal{F}\subseteq \binom{[n]}{k}$, $|\mathcal{F}|\geq\binom{ks+k-2}{k}$ and $\mathcal{F}$ is non-trivial. Then there is a $k$-graph $\mathcal{F}^*\subseteq\binom{[n]}{k}$ obtained from $\mathcal{F}$ via successive shifts such that $\mathcal{F}^*$ is non-trivial and shifted on $[n-3\beta ks]$.
\end{lm}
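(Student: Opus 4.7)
The plan is to construct $\mathcal{F}^*$ as the maximizer of a suitable monovariant over the family of $k$-graphs reachable from $\mathcal{F}$ by non-triviality-preserving shifts, and to derive a contradiction if $\mathcal{F}^*$ fails to be shifted on $[n-3\beta ks]$ by exploiting structural consequences of Proposition~\ref{shift-stop}(ii) together with $\mathcal{F}^*$ being shifted on the initial segment below the obstructing index.

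Define the monovariant $\Phi(\mathcal{G}) := \sum_{F\in\mathcal{G}}\sum_{i\in F}(n-i)$. Any shift $S_{x,y}$ with $x<y$ that actually changes $\mathcal{G}$ strictly increases $\Phi$, and $\Phi$ is bounded above, so every such shifting process terminates. Let $\mathcal{R}$ denote the collection of all $k$-graphs obtainable from $\mathcal{F}$ via a sequence of shifts in which every intermediate graph is non-trivial; pick $\mathcal{F}^*\in\mathcal{R}$ with $\Phi(\mathcal{F}^*)$ maximal. Suppose for contradiction that $\mathcal{F}^*$ is not shifted on $[n-3\beta ks]$. Take the minimum $y_0\in[n-3\beta ks]$ such that $S_{x_0,y_0}(\mathcal{F}^*)\neq\mathcal{F}^*$ for some $x_0<y_0$. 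By minimality, $\mathcal{F}^*$ is shifted on $[y_0-1]$. Maximality of $\Phi(\mathcal{F}^*)$ forces $S_{x_0,y_0}(\mathcal{F}^*)\notin\mathcal{R}$, so this graph has an isolated vertex, necessarily $y_0$, and Proposition~\ref{shift-stop}(ii) yields
\[
N_{\mathcal{F}^*}(x_0,y_0)=\emptyset, \qquad |N_{\mathcal{F}^*}(x_0)|+|N_{\mathcal{F}^*}(y_0)|\leq\binom{n-2}{k-1}.
\]

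The key structural observation is that no edge $F\in\mathcal{F}^*$ with $y_0\in F$ can contain any vertex $v\in[x_0+1, y_0-1]$, nor any $z>y_0$ with $S_{x_0,z}(\mathcal{F}^*)=\mathcal{F}^*$. Indeed, in either case the corresponding shift fixes $\mathcal{F}^*$, so applied to $F$ it forces $F\setminus\{v\}\cup\{x_0\}\in\mathcal{F}^*$ (respectively $F\setminus\{z\}\cup\{x_0\}\in\mathcal{F}^*$), and this new edge would contain both $x_0$ and $y_0$, contradicting $N_{\mathcal{F}^*}(x_0,y_0)=\emptyset$. Consequently $N_{\mathcal{F}^*}(y_0)$ is supported on $(k-1)$-subsets of $[x_0-1]\cup Z_A$, where $Z_A:=\{z\in[y_0+1,n] : S_{x_0,z}(\mathcal{F}^*)\neq\mathcal{F}^*\}$. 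Moreover, applying Proposition~\ref{shift-stop}(ii) to each pair $(x_0,z)$ with $z\in Z_A$ shows $|N_{\mathcal{F}^*}(z)|\leq\binom{n-2}{k-1}-|N_{\mathcal{F}^*}(x_0)|$, so vertices in $Z_A$ are of low degree.

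The main obstacle is converting these structural restrictions into a quantitative contradiction with $|\mathcal{F}^*|\geq\binom{ks+k-2}{k}$. The plan is to estimate $k|\mathcal{F}^*|=\sum_{i\in[n]}|N_{\mathcal{F}^*}(i)|$ by partitioning $[n]$ into $[x_0]$, $[x_0+1, y_0-1]$, $\{y_0\}$, $Z_A$, and $[y_0+1,n]\setminus Z_A$, then upper-bounding each contribution from the degree bounds and from the fact that the vertices of $[x_0+1,y_0-1]$ cannot co-occur with $y_0$ in any edge. The arithmetic slack provided by $\beta\leq 1/(9k)$ and the near-extremality $|\mathcal{F}^*|\geq\binom{ks+k-2}{k}$ should together yield $k|\mathcal{F}^*|<k\binom{ks+k-2}{k}$, the desired contradiction. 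The delicate part is verifying the counting uniformly in the position of $x_0$ and $y_0$, in particular when $x_0$ is very small so that the combinatorial bound $|N_{\mathcal{F}^*}(y_0)|\leq\binom{x_0-1+|Z_A|}{k-1}$ is highly restrictive and one must leverage the low-degree contribution from $Z_A$ more carefully.
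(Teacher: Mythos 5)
Your proposal correctly sets up a terminating monovariant $\Phi$, correctly identifies that $\Phi$-maximality forces any shift that changes $\mathcal{F}^*$ to create an isolated vertex, and correctly extracts the structural facts $N_{\mathcal{F}^*}(x_0,y_0)=\emptyset$ and $N_{\mathcal{F}^*}(y_0)\subseteq\binom{[x_0-1]\cup Z_A}{k-1}$. However, it diverges from the paper's argument in a way that leaves a genuine gap, which you yourself flag (``should together yield\ldots The delicate part is\ldots'').

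The paper's proof contains a crucial extra device that your sketch omits: after \emph{every} meaningful shift, the vertices of $[n]$ are relabelled so that $|N_{\mathcal{F}}(1)|\geq\cdots\geq|N_{\mathcal{F}}(n)|$. Under this relabelling, Proposition~\ref{shift-stop}(i) guarantees the process terminates, and when a candidate shift $S_{x,y}$ with $y\leq n-3\beta ks$ would isolate $y$, Proposition~\ref{shift-stop}(ii) gives $|N_{\mathcal{F}}(x)|+|N_{\mathcal{F}}(y)|<\binom{n-2}{k-1}$, hence $|N_{\mathcal{F}}(y)|<\tfrac12\binom{n-2}{k-1}$; then the sorting forces \emph{every} vertex $v$ in the tail $W=\{n-3\beta ks+1,\ldots,n\}$ to satisfy $|N_{\mathcal{F}}(v)|\leq|N_{\mathcal{F}}(y)|<\tfrac12\binom{n-2}{k-1}$. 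Having roughly $3\beta ks$ vertices with this degree deficit, a Bonferroni count of missing $k$-sets yields $\alpha>\beta ks\binom{n-1}{k-1}$, contradicting $\alpha=\binom{n}{k}-|\mathcal{F}|<\beta ks\binom{n-1}{k-1}$.

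Your approach, by not relabelling, only yields degree control on the single vertex $y_0$ and on $Z_A$, and even the latter is conditional: for $z\in Z_A$ you get $|N_{\mathcal{F}^*}(z)|\leq\binom{n-2}{k-1}-|N_{\mathcal{F}^*}(x_0)|$, which is only small when $|N_{\mathcal{F}^*}(x_0)|$ is large; and for $z\in[y_0+1,n]\setminus Z_A$ the only bound available from $S_{x_0,z}(\mathcal{F}^*)=\mathcal{F}^*$ is $|N_{\mathcal{F}^*}(z)|\leq|N_{\mathcal{F}^*}(x_0)|+\binom{n-2}{k-2}$, which is only small when $|N_{\mathcal{F}^*}(x_0)|$ is small. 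There is no regime of $|N_{\mathcal{F}^*}(x_0)|$ in which both halves of $[y_0+1,n]$ are simultaneously constrained to have degree below roughly $\tfrac12\binom{n-1}{k-1}$, so the Bonferroni-type count that finishes the paper's proof does not close here. Nor does the naive partition-and-bound estimate work: already the trivial bound $\sum_{i<y_0}|N_{\mathcal{F}^*}(i)|\leq(y_0-1)\binom{n-1}{k-1}$ exceeds $k\binom{ks+k-2}{k}$ for $k>3$ once $y_0$ is near $n-3\beta ks$, so something sharper about the small vertices is needed as well. In short, the structural facts you isolate are correct but insufficient; to complete the argument you need the sorted-degree relabelling (or an equivalent mechanism) that propagates the low-degree conclusion from $y_0$ to the entire tail.
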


\pf
We try and find $1\leq x<y\leq n-3\beta ks$ such that the $(x,y)$-shift $S_{x,y}$ is meaningful and we rename the elements of $[n]$ after  $(x,y)$-shift $S_{x,y}$ so that the new family $\widetilde{\mathcal{F}}$ satisfies $|N_{\widetilde{\mathcal{F}}}(1)|\geq \cdots\geq |N_{\widetilde{\mathcal{F}}}(n)|.$ After that, by abuse of notation we denote $\widetilde{\mathcal{F}}$ by $\mathcal{F}$ and continue to find another meaningful $(x',y')$-shift $S_{x',y'}$. After $(x',y')$-shift we denote the new family $\widetilde{\mathcal{F}}$ by $\F$ and rename the elements such that $|N_{\mathcal{F}}(1)|\geq \cdots\geq |N_{\mathcal{F}}(n)|$.
In view of Proposition \ref{shift-stop} this process is going to terminate after a finite number of steps. The only thing that we have to show is that $\widetilde{\mathcal{F}}=S_{x,y}(\mathcal{F})$ is non-trivial for $1\leq x<y\leq n-3\beta ks$.

If $N_{\widetilde{\mathcal{F}}}(y)=\emptyset$, then $|N_{\F}(x,y)|=\emptyset$ and $|N_{\F}(x,\overline{y})|+|N_{\F}(y,\overline{x})|<\binom{n-2}{k-1}$ by Proposition \ref{shift-stop}. It follows that
$$|N_{\mathcal{F}}(x)|+|N_{\mathcal{F}}(y)|=2|N_{\F}(x,y)|+|N_{\F}(x,\overline{y})|+|N_{\F}(y,\overline{x})|<\binom{n-2}{k-1}.$$
Thus
\begin{equation}\label{half-degree1}
N_{\mathcal{F}}(v)\leq N_{\mathcal{F}}(y)<\frac{1}{2}\binom{n-2}{k-1}
\end{equation}
for each $v\in \{y+1,\ldots,n\}$.

Suppose that $y\leq n-3\beta ks$ and $N_{\widetilde{\mathcal{F}}}(y)=\emptyset$, we are going to get a contradiction. To this end we introduce the notation
\begin{equation*}
	\alpha_j=\binom{n-1}{k-1}-|N_{\F}(j)|.
\end{equation*}
Note that $\alpha_x$ counts the number of missing sets in $N_{\mathcal{F}}(x)$, namely the sets $G\in \binom{[n]}{k}$ with $x\in G\notin \mathcal{F}$. Let $W=\{n-3\beta ks+1,\ldots,n\}$. By inequality (\ref{half-degree1}), for each $v\in W$
\begin{equation}\label{half-degree2}
	\alpha_v>\binom{n-1}{k-1}-\frac{1}{2}\binom{n-2}{k-1}>\frac{1}{2}\binom{n-1}{k-1}.
\end{equation}

Let $\alpha$ denote the total number of missing sets for $\mathcal{F}$. On the one hand,
\begin{equation}
	\alpha=\binom{n}{k}-|\F|< \beta ks\binom{n-1}{k-1}
\end{equation}
since $|\mathcal{F}|\geq \binom{ks+k-2}{k}$. On the other hand,
by inclusion-exclusion principle and inequality (\ref{half-degree2}), we have
\begin{equation}
	\begin{split}
		\alpha>&\sum_{i\in W}\alpha_i-\binom{|W|}{2}\binom{n-2}{k-2}\\
		\geq&\frac{|W|}{2}\binom{n-1}{k-1}-\binom{|W|}{2}\binom{n-2}{k-2}\\
		=&\left(\frac{|W|}{2}\frac{n-1}{k-1}-\binom{|W|}{2}\right)	\binom{n-2}{k-2}\\
		>&	\frac{|W|}{2}\left(\frac{n-1}{k-1}-|W|\right)\binom{n-2}{k-2}\\
		=&\frac{3\beta ks}{2}\left(\frac{n-1}{k-1}-3\beta ks\right)\binom{n-2}{k-2}\\
		\geq&\frac{3\beta ks}{2}\cdot\frac{2(n-1)}{3(k-1)}\cdot\binom{n-2}{k-2}\\
		=&\beta ks\binom{n-1}{k-1},
	\end{split}
\end{equation}
a contradiction.

\qed

\section{Proof of Theorem \ref{main-thm}}


In order to prove Theorem \ref{main-thm}, we need the following results. Two families $\mathcal{A}, \mathcal{B}$ are called \emph{cross-intersecting} if $A\cap B\neq\emptyset$ for all $A\in \mathcal{A}$, $B\in\mathcal{B}$. Let $\A_1,\A_2,\ldots,\A_t\subseteq \binom{[n]}{k}$. $\A_1,\A_2,\ldots,\A_t$ are called \emph{cross-intersecting} if $\mathcal{A}_i$ and  $\mathcal{A}_j$ are cross-intersecting for any $\A_i,\A_j\in \{\A_1,\A_2,\ldots,\A_t\}$ with $i\neq j$.



\begin{thm}[Shi, Frankl and Qian \cite{SFQ}]\label{t-crossintersecting}
	Let $n,k,t$ be positive integers with $n\geq 2k$ and $t\geq 2$. If $\A_1,\A_2,\ldots,\A_t\subseteq \binom{[n]}{k}$ are non-empty cross-intersecting families, then
	$$\sum_{i=1}^t|\A_i|\leq \max\left\{\binom{n}{k}-\binom{n-k}{k}+t-1,t\binom{n-1}{k-1}\right\}$$
	and the equality holds if and only if, up to isomorphism, one of the following holds:
	\begin{enumerate}[itemsep=0pt,parsep=0pt,label=$($\roman*$)$]
		\item $n>2k$, $\A_i=\{A\in \binom{[n]}{k}:1\in A\}$ for every $i\in \{1,2,\ldots,t\}$ if $\binom{n}{k}-\binom{n-k}{k}+t-1\leq t\binom{n-1}{k-1}$, or $\A_1=\{A\in \binom{[n]}{k}:[k]\cap A\neq \emptyset\}$ and $\A_j=\{[k]\}$ for every $j\in\{2,\ldots,t\}$ (up to rearrangement of families) if $\binom{n}{k}-\binom{n-k}{k}+t-1\geq t\binom{n-1}{k-1}$;
		\item $n=2k$, $t=2$, $\A_1\subseteq \binom{[n]}{k}$ with $0<|\A_1|<\binom{n}{k}$ and $\A_2=\binom{[n]}{k}\setminus \overline{\A_1}$;
		\item $n=2k$, $t\geq 3$, $\A_1\subseteq \binom{[n]}{k}$ is intersecting with $|\A_1|=\binom{n-1}{k-1}$ and $\A_2=\cdots=\A_t=\A_1$.
	\end{enumerate}
\end{thm}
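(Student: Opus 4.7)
The plan is to apply simultaneous shifting to reduce to the case where every family is shifted, and then run an induction on $t$. For each pair $1\le i<j\le n$, apply the $(i,j)$-shift to all $t$ families at the same time; this preserves every $|\A_\ell|$ by Lemma~\ref{shift-prop}(i), and a short case verification (on whether two given sets $A\in\A_a$ and $B\in\A_b$ are moved by $S_{i,j}$) shows that cross-intersection is preserved. Since $\sum_{\ell=1}^{t}\sum_{v\in[n]}|N_{\A_\ell}(v)|^{2}$ is bounded and strictly increases at every genuine shift, the process terminates and we may assume every $\A_\ell$ is shifted on $[n]$. Each non-empty shifted family contains the minimum element $[k]$ of the shift order, so $[k]\in\A_\ell$ for all $\ell$; in particular cross-intersection with any other family forces every $A\in\A_\ell$ to meet $[k]$. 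Relabel so that $|\A_1|\ge\cdots\ge|\A_t|\ge 1$.

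I now induct on $t$, with the base case $t=2$ being the classical cross-intersection inequality (also proved by shifting). The inductive step splits on $|\A_t|$. In the easy case $\A_t=\{[k]\}$, each remaining $\A_j$ is cross-intersecting with $\{[k]\}$, so $\A_j\subseteq\{A\in\binom{[n]}{k}:A\cap[k]\ne\emptyset\}$. Applying the inductive hypothesis to the $t-1$ families $\A_1,\ldots,\A_{t-1}$ yields
\[
\sum_{i=1}^{t-1}|\A_i|\le\max\Bigl\{\binom{n}{k}-\binom{n-k}{k}+t-2,\;(t-1)\binom{n-1}{k-1}\Bigr\},
\]
and adding $|\A_t|=1$, together with the trivial inequality $(t-1)\binom{n-1}{k-1}+1\le t\binom{n-1}{k-1}$, gives the claimed bound. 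In the hard case $|\A_t|\ge 2$, since $\A_t$ is shifted with at least two elements it contains the second smallest $k$-set $[k-1]\cup\{k+1\}$ in addition to $[k]$, and the goal is to combine these extra constraints with the shiftedness of every $\A_i$ to show that $\A_i\subseteq\{A:1\in A\}$ for every $i$, giving $\sum|\A_i|\le t\binom{n-1}{k-1}$.

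The equality characterisation is then obtained by tracing the shifts backwards and using the rigidity of the two extremal configurations described in part (i); the boundary cases (ii) and (iii) for $n=2k$, where complements produce additional possibilities, are singled out by a separate direct inspection. The principal obstacle is the hard case $|\A_t|\ge 2$: cross-intersecting with a single additional set of $\A_t$ does not by itself force every $\A_i$ into the star at the vertex $1$, and one must bootstrap by iterating the constraint with further shifted elements of $\A_t$, or else run a secondary induction that peels off the coordinate $1$ by decomposing each $\A_i$ into its link at $1$ and its restriction to $\binom{\{2,\ldots,n\}}{k}$ and exploiting cross-intersection in both pieces. This combinatorial bootstrapping, together with the careful bookkeeping required to recover the extremal families up to isomorphism from the shifted reduction, is where the real technical work of the proof lies.
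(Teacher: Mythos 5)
The statement you are proving is quoted in the paper as a known result of Shi, Frankl and Qian (reference [SFQ]); the paper does not contain a proof of it and uses it as a black box, so there is no in-paper argument to compare against. Evaluating your attempt on its own merits, there is a genuine gap. The shifting setup and the easy branch of the induction are fine: simultaneous $(i,j)$-shifts preserve sizes and cross-intersection, shifted non-empty families contain $[k]$, and the case $\mathcal{A}_t=\{[k]\}$ reduces cleanly to $t-1$ families. But the entire content of the theorem sits in the branch $|\mathcal{A}_t|\geq 2$, and there you only describe a goal ("show $\mathcal{A}_i\subseteq\{A:1\in A\}$") and acknowledge that the obvious constraints do not yield it. Indeed they cannot: knowing $[k]$ and $[k-1]\cup\{k+1\}$ both lie in $\mathcal{A}_t$ only forces each $A\in\mathcal{A}_i$ to meet $[k]$ and to meet $[k-1]\cup\{k+1\}$, which is satisfied, for example, by $A=\{2,k+2,\dots,2k\}$, a set avoiding $1$ entirely. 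So even with shiftedness of all families, the claimed conclusion does not follow from this pair of sets, and the "bootstrap by iterating the constraint with further shifted elements of $\mathcal{A}_t$" is not an argument — it is a restatement of the difficulty. Until this case is closed, neither the inequality nor the equality characterization is proved.

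Two further points you should not wave away. First, "tracing the shifts backwards" to recover the extremal configurations is delicate: shifting is not invertible, and one must argue that equality in the shifted situation propagates back through each shift, which requires a rigidity analysis of when a shift can preserve both cross-intersection and the total size at the extremal value. Second, the $n=2k$ cases (ii) and (iii) are not a routine "separate direct inspection"; they arise precisely because complementation produces a continuum of extremal families (e.g.\ any $\mathcal{A}_1$ with $\mathcal{A}_2=\binom{[n]}{k}\setminus\overline{\mathcal{A}_1}$ when $t=2$), and a shifting-based reduction tends to collapse these to a single family, so you would need to account separately for all of them. As it stands, the proposal correctly identifies a reasonable framework but leaves the central lemma unproved.
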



Similar to Lemma 2 in \cite{LM}, Guo, Lu and Peng \cite{GLP} prove the following result. We say that $\F$ is \emph{s-saturated}, if $\nu(\F)\leq s$, but $\nu(\{F\}\cup \F)=s+1$ for every $F\notin \F$.
\begin{thm}\label{big-clique}
	For any integer $k\geq 3$, there exist $\varepsilon>0$ and a positive integer $n_0$ such that the following holds. Let $\F$ be a s-saturated $k$-graph on $n>n_0$ vertices and $s$ be an integer with $ n(1/k-1/(2k^2))-1< s\leq (n-k+1)/k$. If $\nu(\F)\leq s$, and $(1-\varepsilon)ks\leq\omega(\F)\leq ks+k-3$, then
	\begin{equation*}
		|\F|\leq\binom{ks+k-1}{k}-3.98\binom{(1-\varepsilon)ks}{k-1}+(2+8\varepsilon k^4)\binom{n}{k-1}.
	\end{equation*}
\end{thm}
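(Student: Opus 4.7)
I would fix a maximum clique $K$ of $\F$ with $|K|=t=\omega(\F)$, so $(1-\varepsilon)ks\le t\le ks+k-3$, and set $L=V(\F)\setminus K$. The lower bound $s>n(1/k-1/(2k^2))-1$ together with $t\ge(1-\varepsilon)ks$ forces $|L|=n-t< n/(2k)+k+\varepsilon n$, which is much smaller than $t$. The natural move is to decompose
$$|\F|=\binom{t}{k}+\sum_{j=1}^{k}|\F_j|,\qquad \F_j:=\{F\in\F:|F\cap L|=j\},$$
and bound each layer using the three available constraints: $\omega(\F)=t$, $\nu(\F)\le s$, and $s$-saturation. The term $\binom{t}{k}$ accounts for $\F[K]$. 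Layers $\F_j$ with $j\ge 2$ are handled crudely: since $K$ already contains a matching of size $\lfloor t/k\rfloor$, any edge of $\F_j$ with $j\ge 2$ must use at least two vertices of the small set $L$, and routine counting using $|L|\le n/(2k)+k+\varepsilon n$ bounds $\sum_{j\ge 2}|\F_j|$ well below the slack term $8\varepsilon k^{4}\binom{n}{k-1}$.

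The main work lies in the layer $\F_1$ of edges with exactly one vertex in $L$. For $v\in L$ set $d_v:=|\{T\in\binom{K}{k-1}:T\cup\{v\}\in\F\}|$, so that $|\F_1|=\sum_{v\in L}d_v$. Two clique-extension facts restrict $d_v$: (i) if $d_v=\binom{t}{k-1}$ then $K\cup\{v\}$ would be a $(t+1)$-clique, contradicting $\omega(\F)=t$; (ii) for any pair $v_1,v_2\in L$ and any $(t-1)$-subset $K_0\subseteq K$, the $(t+1)$-set $K_0\cup\{v_1,v_2\}$ cannot be a clique. Combined with the $s$-saturation hypothesis, which attaches a witness matching of size $s$ to every non-edge, these restrictions can be used to show $\sum_{v\in L}\bigl(\binom{t}{k-1}-d_v\bigr)\ge 3.98\binom{(1-\varepsilon)ks}{k-1}$ after a careful case analysis on the structure of the missing subsets. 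Morally, ``completing'' $K$ to the extremal clique of size $ks+k-1$ which realises $\binom{ks+k-1}{k}$ would require inserting at least two extra vertices into $L$, and clique-maximality forbids each such phantom vertex from carrying close to $2\binom{t}{k-1}$ of its edges, giving a combined deficit of approximately $4\binom{t}{k-1}$; the term $(2+8\varepsilon k^{4})\binom{n}{k-1}$ then absorbs the crude upper bound on the total number of edges that meet $L$.

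The main obstacle is the sharp constant $3.98$. A na\"ive ``at least one missing $(k-1)$-subset per vertex of $L$'' argument from (i) alone yields only a deficit of $|L|$, which is far too weak, and even adding (ii) na\"ively gives only $2\binom{t}{k-1}$. Extracting the sharp constant requires a careful shifting/switching reduction, in the spirit of Lemma~2 of \cite{LM} for $k=3$ lifted to general $k$, that compresses $\F$ into a canonical configuration in which obstructions (i) and (ii), paired with the $s$-saturation witness matchings, can be summed exactly while ruling out the possibility that distinct $v\in L$ account for the same missing subsets. Executing this reduction while keeping track of the lower-order error terms is what produces the explicit constants $3.98$ and $2+8\varepsilon k^{4}$ in the stated bound.
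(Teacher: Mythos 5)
First, a point of reference: the paper does not prove Theorem \ref{big-clique} at all --- it is imported from Guo, Lu and Peng \cite{GLP} as an adaptation of Lemma~2 of \cite{LM} to general $k$ --- so there is no in-paper argument to measure your sketch against. Judged on its own terms, your proposal has a genuine quantitative gap that is not merely a matter of unexecuted details.

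The problem is the size of $L=V(\F)\setminus K$. The hypotheses only give $s>n(1/k-1/(2k^2))-1$, so $n$ may be as large as roughly $ks+s/2$, while $t=\omega(\F)$ may be as small as $(1-\varepsilon)ks$; hence $|L|=n-t$ can be of order $s$, not of order $ks+k-1-t$. Your bound for $\F_1$ has the shape $|L|\binom{t}{k-1}$ minus a total deficit of $3.98\binom{(1-\varepsilon)ks}{k-1}$. But the budget available for non-clique edges, namely $\binom{ks+k-1}{k}-\binom{t}{k}+(2+8\varepsilon k^4)\binom{n}{k-1}$, is of order $\varepsilon ks\binom{ks}{k-1}$ when $t=(1-\varepsilon)ks$ (and only of order $\binom{ks}{k-1}$ when $t=ks+k-3$), whereas $|L|\binom{t}{k-1}$ can be of order $s\binom{ks}{k-1}$; a deficit of about $4\binom{ks}{k-1}$ is negligible against this discrepancy. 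The same issue defeats your treatment of $\sum_{j\ge2}|\F_j|$: the crude bound $\binom{|L|}{2}\binom{n-2}{k-2}$ is of order $(n/k)\binom{n}{k-1}$ when $|L|=\Theta(n/k)$, exceeding the slack $8\varepsilon k^{4}\binom{n}{k-1}$ by a factor of about $n$. Your picture of two phantom vertices each missing about $2\binom{t}{k-1}$ sets is accurate only in the corner case $t=ks+k-3$, $n=ks+k-1$, $|L|=2$. In the general range one must show that the \emph{average} link size $d_v$ over $v\in L$ is far below $\binom{t}{k-1}$, which requires exploiting $\nu(\F)\le s$ globally (for instance via a maximum matching or cross-intersecting-type constraints among the links $N_{\F}(v)$, as the paper does in the case $\omega(\F)=ks+k-2$), rather than a per-vertex clique-maximality deficit. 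Finally, the step that would produce the constant $3.98$ is exactly the part you defer to ``a careful case analysis'' and ``a careful shifting/switching reduction,'' so even in the favourable corner case the proposal is not yet a proof.
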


\begin{proof}[Proof of Theorem \ref{main-thm}]

Suppose that $|\F|>\max_{i\in\{0,1\}}\{|\mathcal{E}_i(n,k,s)|\}>\binom{ks+k-2}{k}$, we will get a contradiction. Let $\varepsilon$ be a sufficiently small constant as in Theorem \ref{big-clique} and $\beta$ be a constant such that $3\beta<\frac{\varepsilon^k}{3^{k+2}k!}$. By Lemma \ref{non-trivial-shift}, there is a $k$-graph $\F'$ such that $\F'$ is non-trivial and shifted on $[n-3\beta ks]$. According to Lemma \ref{shift-prop}, $|\F'|=|\F|$ and $\nu(\F')\leq \nu(\F)$. So we may assume that $\F$ is non-trivial and shifted on $[n-3\beta ks]$ in our proof. Since $|\F|>\binom{ks+k-2}{k}$, we have
\begin{equation}
	\begin{split}
	\left|\F[n-3\beta ks]\right|>&\binom{ks+k-2}{k}-3\beta ks\binom{n-1}{k-1}\\
	>&\binom{ks+k-2}{k}-3\beta\cdot\frac{(1+2\beta)^{k-1}(ks)^k}{(k-1)!}\\
	>&\binom{ks+k-2}{k}-6\beta (ks)^k.
	\end{split}
\end{equation}
Note that $(1-3\beta)ks+k\leq n-3\beta ks\leq (1-2\beta) ks+k-2$. By Lemma \ref{large-clique}, $\omega(\F)>(1-\varepsilon)ks$. Since we can turn a $k$-graph without a matching of size $s+1$ to an $s$-saturated graph by adding edges, we may assume that $\F$ is $s$-saturated in the following proof.

We first claim that $\omega(\F)\leq ks+k-2$. Otherwise, suppose that there is a clique $U$ of size $ks+k-1$, then there is an edge $F$ not contained in $U$ since $\F$ is non-trivial. But there is a matching of size $s+1$ in $\F[U\cup F]$, a contradiction. For the case $\omega(\F)\leq ks+k-3$, by Theorem \ref{big-clique},
\begin{equation}
	\begin{split}
		|\F|\leq& \binom{ks+k-1}{k}-3.98\binom{(1-\varepsilon)ks}{k-1}+\left(2+8\varepsilon k^4\right)\binom{n}{k-1}\\
		<& \binom{ks+k-1}{k}-1.5\binom{(1-\varepsilon)ks}{k-1}\\
		<& \binom{ks+k-1}{k}-\binom{ks+k-2}{k-1}\\
		=&\binom{ks+k-2}{k}
	\end{split}
\end{equation}
for sufficiently large $s$ and small $\varepsilon$, a contradiction.

Now let us consider the case $\omega(\F)=ks+k-2$.
Let $U$ be the largest clique in $\F$ such that $|U|=ks+k-2$. Next we focus on vertices in $[n]\setminus U$. We claim that for any vertex $x\in [n]\setminus U$, $N_{\F}(x)\subset \binom{U}{k-1}$. Indeed, if there is an edge $F\in \F$ such that $x,y\in F$ and $x,y\in [n]\setminus U$, then $\F[U\setminus F]$ contains a matching $M$ of size $s$ since $|U\setminus F|=ks$. Thus $M\cup \{F\}$ is a matching of size $s+1$ in $\F$, which contradicts that $\nu(\F)\leq s$.

Let $U=[ks+k-2]$. So $N_{\mathcal{F}}(ks+k-1),\ldots,N_{\mathcal{F}}(n)\subseteq \binom{U}{k-1}$. Since $\nu(H)\leq s$, one can see that $N_{\mathcal{F}}(i),N_{\mathcal{F}}(j)$ are cross-intersecting for $ks+k-1\leq i<j\leq n$. Indeed, if there exist $i,j$ such that $ks+k-1\leq i<j\leq n$ and $N_{\mathcal{F}}(i),N_{\mathcal{F}}(j)$ are not cross-intersecting, then there exist $F_i\in N_{\mathcal{F}}(i)$ and $F_j\in N_{\mathcal{F}}(j)$ such that $F_i\cap F_j=\emptyset$. Thus $M_1:=\{F_i\cup\{i\},F_j\cup\{j\}\}$ is a matching of size two in $\F$. Since $|U\setminus V(M_1)|=k(s-1)$, $\F[U\setminus V(M_1)]$ contains a matching $M_2$ of size $s-1$. Thus $M_1\cup M_2$ is a matching of size $s+1$ in $\F$, which contradicts that $\nu(H)\leq s$.

Recall that $\F$ is non-trivial, then $N_{\mathcal{F}}(i)$ is non-empty for each $i\in \{ks+k-1,\ldots,n\}$. By Theorem \ref{t-crossintersecting},
\begin{equation*}
	\begin{split}
	\sum_{i=ks+k-1}^n|N_{\F}(i)|\leq\max\left\{\binom{ks+k-2}{k-1}-\binom{ks-1}{k-1}+n-ks-k+1,(n-ks-k+2)\binom{ks+k-3}{k-2}\right\}.
	\end{split}
\end{equation*}
Thus
\begin{equation*}
	\begin{split}
		|\F|&=\binom{ks+k-2}{k}+\sum_{i=ks+k-1}^n|N_{\F}(i)|\\
		&\leq \binom{ks+k-2}{k}+\max\left\{\binom{ks+k-2}{k-1}-\binom{ks-1}{k-1}+n-ks-k+1,(n-ks-k+2)\binom{ks+k-3}{k-2}\right\}\\
		&= \max_{i\in\{0,1\}}\{|\mathcal{E}_i(n,k,s)|\},
	\end{split}
\end{equation*}
a contradiction.

\end{proof}


\begin{thebibliography}{10}
%
	\bibitem{AFHRRS}
	N.~Alon, P.~Frankl, H.~Huang, V.~R\"odl, A.~Ruci\'nski and B.~Sudakov,
	\newblock Large matchings in uniform hypergraphs and the conjectures of Erd\H os and Samuels,
	\newblock {\em J. Combin. Theory Ser. A}, \textbf{119} (2012), 1200--1215.
	
	\bibitem{BDE}
	B.~Bollob\'as, D.~E. Daykin and P.~Erd\H os,
	\newblock Sets of independent edges of a hypergraph,
	\newblock {\em Quart. J. Math. Oxford Ser.}, \textbf{27} (1976), 25--32.
	
	\bibitem{EG}
	P.~Erd\H os and T.~Gallai,
	\newblock On maximal paths and circuits of graphs,
	\newblock {\em Acta Math. Acad. Sci. Hungar.}, \textbf{10} (1959), 337--356.
	
	
	\bibitem{EKR}
	P.~Erd\H os, C.~Ko and R.~Rado,
	\newblock Intersection theorems for systems of finite sets,
	\newblock {\em Quart. J. Math. Oxford Ser.}, \textbf{12} (1961), 313--320.
	
	\bibitem{E65}
	P.~Erd\H os,
	\newblock A problem of independent $r$-tuples,
	\newblock {\em Annales Univ. Budapest}, \textbf{8} (1965), 93--95.
	
	\bibitem{F95}
	P.~Frankl,
	\newblock Extremal set systems, in: Handbook of Combinatorics,
	\newblock {Elsevier, Amsterdam, 1995}.
	
	
	\bibitem{FRR}
	P. Frankl, V. R\"odl and A. Ruci\'nski, On the maximum number of edges in a triple system not containing a disjoint family of a given size, \emph{Combin. Probab. Comput.}, \textbf{21} (2012), 141--148.
	
	
	\bibitem{F13}
	P.~Frankl,
	\newblock Improved bounds for Erd\H os Matching Conjecture,
	\newblock {\em J. Combin. Theory Ser. A}, \textbf{120} (2013), 1068--1072.
	
	\bibitem{F} P. Frankl,
	Proof of the Erd\H{o}s matching conjecture in a new range,
	\emph{Israel J. Math.}, \textbf{222} (2017), 421-430.
	
	\bibitem{F17}
	P. Frankl, On maximum number of edges in a hypergraph with given matching number, \emph{Discrete Appl. Math.}, \textbf{216} (2017), 562-581.
	
	\bibitem{FLM12}
	P. Frankl, T. {\L}uczak and K. Mieczkowska, On matchings in hypergraphs, \emph{Electron. J. Combin.}, \textbf{19} (2012), \#R42.
	
	\bibitem{FK19}
	P.~Frankl and A.~Kupavskii,
	\newblock Two problems on matchings in set families - in the footsteps of Erd\H os and Kleitman,
	\newblock {\em J. Combin. Theory ser. B}, \textbf{138} (2019), 286--313.
	
	\bibitem{F2020}
	P. Frankl, On non-trivial families without a perfect matching, \emph{European J. Combin.}, \textbf{84} (2020), 103044.
	
	
	
	\bibitem{FK}
	P.~Frankl and A.~Kupavskii,
	\newblock The Erd\H os matching conjecture and concentration inequalities, \emph{ J. Combin. Theory ser. B}, \textbf{157} (2022), 366-400.

\bibitem{GLM}	M.~Guo, H.~Lu and D.~Mao, A stability result on matchings in $3$-uniform hypergraphs, \emph{SIAM J. Discrete Math.}, \textbf{36} (2022), 2339--2351.

\bibitem{GLP}
M. Guo, H. Lu and X. Peng. Anti-Ramsey number of matchings in 3-uniform hypergraphs. \emph{SIAM J. Discrete Math.}, \textbf{37} (2023), 1970--1987.
	
	\bibitem{HM}
	A.~J.~W. Hilton and E.~C. Milner,
	\newblock Some intersection thorems for systems of finite sets,
	\newblock {\em Quart. J. Math. Oxford Ser.}, \textbf{18} (1967), 369--384.
	
	\bibitem{HLS}
	H.~Huang, P.~Loh and B.~Sudakov,
	\newblock The size of a hypergraph and its matching number,
	\newblock {\em Combin. Probab. Comput.}, \textbf{21} (2012), 442--450.
	
%
	
%
%
\bibitem{KK}
D. Koluaev and A. Kupavskii,
\newblock Erd\H os matching conjecture for almost perfect matchings,
\newblock {\em Discrete Math.}, \textbf{346} (2023), 113304.

	\bibitem{LYY}	H. Lu, X. Yu and X. Yuan,
	\newblock Nearly perfect matchings in uniform hypergraphs,
	\newblock {\em SIAM J. Discrete Math.}, \textbf{35} (2021), 1022--1049.
	
	\bibitem{LM}
	T.~\L uczak and K.~Mieczkowska,
	\newblock On Erd\H os extremal problem on matchings in hypergraphs,
	\newblock {\em J. Combin. Theory Ser. A}, \textbf{124} (2014), 178--194.
	
	
	\bibitem{SFQ}
	C. Shi, P. Frankl and J. Qian,
	\newblock On non-empty cross-intersecting famlies,
	\newblock {\em Combinatorica}, \textbf{42} (2022), 1513--1525.



\end{thebibliography}
\end{document}